 \newtheorem{theorem}{Theorem}[section]
\newtheorem{lemma}[theorem]{Lemma}
 \newtheorem{corollary}[theorem]{Corollary}
 \newtheorem{theoremX}{Theorem} 
 \newtheorem{corollaryX}[theoremX]{Corollary} 
      \theoremstyle{definition}
     \newtheorem*{definition}{Definition}
     \newtheorem{example}[theorem]{Example}
     \theoremstyle{remark}
     \newtheorem{remark}[theorem]{Remark}
\newcommand{\Alt}{\mathrm{Alt}}
\newcommand{\Sym}{\mathrm{Sym}}
\newcommand{\Sol}{\mathrm{Sol}}
\newcommand{\Aut}{\mathrm{Aut}}
\newcommand{\Out}{\mathrm{Out}}
\newcommand{\Irr}{\mathrm{Irr}}
\newcommand{\PSL}{\mathrm{PSL}}
\newcommand{\Sp}{\mathrm{Sp}}
\newcommand{\SL}{\mathrm{SL}}
\newcommand{\GL}{\mathrm{GL}}
\newcommand{\C}{\mathbf{C}}
\newcommand{\kk}{\mathbf{k}}
 \newcommand{\rr}{\mathbf{r}}
\newcommand{\D}{\mathbf{D}}
\newcommand{\Z}{\mathbf{Z}}
\newcommand{\Di}{\mathbf{R}}
\renewcommand{\a}{\boldsymbol{\varepsilon}}
\newcommand{\iot}{\boldsymbol{\iota}} 
\begin{document}

\begin{frontmatter}[classification=text]

\title{Quasirandom and Quasisimple Groups} 

\author[mb]{Marco Barbieri\thanks{Supported by the Slovenian Research Agency program P1-0222 and grant J1-50001, and member of the Italian GNSAGA INdAM research group.}}
\author[ls]{Luca Sabatini\thanks{Supported by the Royal Society.}}

\begin{abstract}
Fix $\varepsilon > 0$.
We say that a finite group $G$ is $\varepsilon$-quasirandom if every nontrivial irreducible complex representation of $G$
has degree at least $|G|^\varepsilon$.
In this paper, we give a structure theorem for large $\varepsilon$-quasirandom groups,
and we completely classify the $\frac{1}{5}$-quasirandom groups.
\end{abstract}
\end{frontmatter}

\section{Introduction}
 
Informally, a finite group is called {\itshape quasirandom} if its nontrivial complex irreducible representations have ``large'' degree,
where the precise meaning of ``large'' depends on context.
The terminology was introduced explicitly in \cite{Gow08},
where the name arises because dense Cayley graphs in quasirandom groups are
quasirandom graphs in the sense of Chung, Graham, and Wilson \cite{CGW89}.
This property had already been used implicitly to construct expander graphs in \cite{Gam02,SX91}.
After Gowers' groundbreaking paper, quasirandom groups have been at the center of spectacular development.
Bourgain and Gamburd \cite{BG08} used the quasirandomness of $\SL_2(p)$ to show that random Cayley graphs on these groups
are expanders with high probability.
Their work has inspired an impressive series of articles, culminating in \cite{BGGT15}.
In a more subtle way, quasirandomness played a key role in Helfgott's work \cite{Hel08} and generalizations,
most notably \cite{BGT11,EMPS23,PS16}.
Tao dedicated an entire chapter of his book \cite{Tao15} to quasirandom groups.

As said, being quasirandom is a quantitative property.
 For applications to expansion in Cayley graphs,
 one needs that every nontrivial irreducible representation has degree at least $|G|^\varepsilon$,
  for some constant $\varepsilon>0$ independent of the size of $G$.
  This motivates the following definition:
  
  \begin{definition}
Let $\varepsilon>0$.
A finite group $G$ is {\itshape $\varepsilon$-quasirandom} if every nontrivial irreducible complex representation of $G$
has degree at least $|G|^\varepsilon$.
\end{definition} 
  
It is well known that the finite simple groups of Lie type are $\varepsilon$-quasirandom
for some $\varepsilon>0$ which depends only on the rank \cite{LS74},
a fact that is exploited in all the papers we mentioned above.
A more general discussion on quasirandomness is taken by Babai, Nikolov and Pyber \cite{BNP08}, and Nikolov and Pyber \cite{NP11}.
The celebrated {\itshape Gowers' trick} \cite{NP11} says that,
if $G$ is $\varepsilon$-quasirandom and $S \subseteq G$ is a large subset, namely $|S| \geq |G|^{1-\frac{\varepsilon}{3}}$, then $S^3=G$.
This fact, as well as other natural invariants of $G$, can be used to characterize quasirandomness.
Such equivalences look particularly nice when expressed in the exponential regime (see Theorem \ref{thChar} below).

The algebraic structure of quasirandom groups remains unexplored.
 The present article was born from our need to answer the question:
 {\itshape what, concretely, is a quasirandom group?}
In the first main result we describe the large $\varepsilon$-quasirandom groups, for any fixed $\varepsilon >0$.
 They are precisely the groups whose quotient by the solvable radical
  is a relatively large direct product of boundedly many finite simple groups of Lie type of bounded rank.
 
\begin{theoremX} \label{thm:A}
	Fix $\varepsilon>0$.
	If $G$ is an $\varepsilon$-quasirandom group, then $|G/\Sol(G)| \geq |G|^\varepsilon$.
	If $|G|$ is sufficiently large depending on $\varepsilon$,
	then $G/\Sol(G)$ is a direct product of boundedly many finite simple groups of Lie type of bounded rank.
	
	Conversely, suppose that $G$ is perfect, $|G/\Sol(G)| \geq |G|^\varepsilon$, and that $G/\Sol(G)$ is $\varepsilon$-quasirandom.
	If $|G|$ is sufficiently large depending on $\varepsilon$,
	then $G$ is $\frac{\varepsilon^2}{3}$-quasirandom.
\end{theoremX}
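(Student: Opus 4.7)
For the first assertion, observe that every irreducible representation of $G/\Sol(G)$ inflates to an irreducible representation of $G$ trivial on $\Sol(G)$, and has degree at least $|G|^\varepsilon$ if nontrivial. A nontrivial $\varepsilon$-quasirandom group $G$ must be perfect (a linear character would violate the degree bound), so $G/\Sol(G) \neq 1$, and the identity $|G/\Sol(G)| = \sum_\rho (\dim\rho)^2$ yields $|G/\Sol(G)| \geq 1 + |G|^{2\varepsilon} \geq |G|^\varepsilon$. Now set $H := G/\Sol(G)$: this group is perfect, $\varepsilon$-quasirandom, and has trivial solvable radical, so $\mathrm{soc}(H) = \prod_i S_i^{n_i}$ with the $S_i$ nonabelian simple, and the perfectness of $H$ together with Schreier's conjecture forces $H = \prod_i S_i^{n_i}$. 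The condition $\min_i m(S_i) \geq |H|^\varepsilon$, combined with the Classification of Finite Simple Groups and the known lower bounds on $m(S)$ for each family --- $m(\Alt_n) = n-1$ with $|\Alt_n| = n!/2$, sporadics of bounded order, and Lie type of rank $r$ over $\mathbb{F}_q$ with $m \asymp q^r$ and $|S| \asymp q^{r^2}$ --- forces $r \leq 1/\varepsilon$ in the Lie-type case and, once $|G|$ is large, excludes alternating and sporadic components altogether. The polynomial relation $|S_i| \leq m(S_i)^{C(r)}$ for bounded-rank Lie-type groups then caps the total number of factors by $\sum_i n_i \leq 1/(C\varepsilon)$.

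For the converse, let $N := \Sol(G)$ and take a nontrivial $\rho \in \Irr(G)$. If $\rho$ is trivial on $N$, it descends to a nontrivial irreducible of $H$, whence $\dim\rho \geq |H|^\varepsilon \geq |G|^{\varepsilon^2}$. Otherwise Clifford's theorem gives $\rho|_N = e \bigoplus_{g \in G/I_\psi} \psi^g$ for some nontrivial $\psi \in \Irr(N)$ with inertia $I_\psi$, and $\dim\rho = e \cdot [G : I_\psi] \cdot \dim\psi$. If $\psi$ is not $G$-invariant, then $I_\psi/N$ is a proper subgroup of $H$; the usual permutation-representation argument shows that every proper subgroup of an $\varepsilon$-quasirandom group has index at least the minimum irreducible degree, hence $[G:I_\psi] \geq m(H) \geq |G|^{\varepsilon^2}$. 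If $\psi$ is $G$-invariant, Clifford--Schur identifies $e$ with the dimension of an irreducible projective representation of $H$ whose cocycle is determined by $\psi$; as the Schur multipliers of the simple factors are bounded in our setting, the minimum dimension of a nontrivial projective irreducible of $H$ is comparable to $m(H)$, so either $e \geq |G|^{\varepsilon^2}$, or $e = 1$ and $\rho$ is an extension of $\psi$.

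The last subcase $e = 1$, where $\dim\rho = \dim\psi$, is the \emph{main obstacle}. Here $\rho(G) \leq \GL_{\dim\psi}(\CC)$ contains the solvable normal subgroup $\rho(N)$, and $\rho(G)/\rho(N)$ is a nontrivial (by perfectness of $G$) quotient of $H$, hence a subproduct of $\prod_i S_i^{n_i}$. The polynomial representation-theoretic bound $|S| \leq m_{\CC}(S)^{O(1)}$ for simple Lie-type groups of bounded rank projectively embedded into $\GL_d(\CC)$, together with the lower bound $|S_i| \geq m(S_i) \geq |G|^{\varepsilon^2}$ from Part~2, yields $(\dim\rho)^{O(1)} \geq |G|^{\varepsilon^2}$. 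Optimizing over the bounded rank and number of factors from Part~2 then gives $\dim\rho \geq |G|^{\varepsilon^2/3}$ for $|G|$ sufficiently large; the exponent $\varepsilon^2/3$ reflects the tradeoff between the structural bounds on $H$ and the representation-theoretic bound on finite simple subgroups of $\GL_d(\CC)$.
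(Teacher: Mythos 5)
Your first assertion and the broad outline of the structure part are fine, but there are two genuine gaps. First, in the structure part you claim that ``the perfectness of $H$ together with Schreier's conjecture forces $H=\prod_i S_i^{n_i}$.'' This is false as stated: for a nonabelian simple $S$ and $m\geq 5$, the wreath product $S\wr\Alt(m)=S^m\rtimes\Alt(m)$ is perfect and has trivial solvable radical, yet is not a product of simple groups. Schreier's conjecture only makes the $\prod_i\Out(S_i)$ part of $\Aut(\mathrm{soc}(H))/\mathrm{Inn}$ solvable; it says nothing about the permutation part $\Sym(n_i)$, and perfectness does not kill alternating sections there. This is exactly where the paper has to use quasirandomness quantitatively: if $H\neq\mathrm{soc}(H)$, then $H/\mathrm{soc}(H)$ has a nonabelian simple quotient $L$ arising as a section of some $\Sym(n_i)$, hence (after a lemma converting sections of $\Sym(n)$ into subgroups) $\D_0(L)\leq n_i-1$, while $|H|\geq 60^{n_i}$; on the other hand $\varepsilon$-quasirandomness of the quotient $L$ forces $|H|\leq\D_0(L)^{1/\varepsilon}$, and these bounds collide once $|H|$ is large. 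Your sketch omits this argument entirely, and without it the claimed equality $H=\mathrm{soc}(H)$ does not follow. (A minor further slip: to cap the number of factors you need a \emph{lower} bound on each $|S_i|$, e.g.\ the trivial $|S_i|>\D_0(S_i)^2\geq|H|^{2\varepsilon}$, not the upper bound $|S_i|\leq m(S_i)^{C(r)}$ you quote.)

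Second, in the converse your Clifford-theoretic route is genuinely different from the paper's (which simply quotes \cite[Lemma 4.10]{EMPS23}, $\D_0(G)\geq c\,\D_0(G/\Sol(G))^{1/2}$ for perfect $G$, and finishes with a two-line logarithmic computation), but it is incomplete precisely at the point you call the main obstacle. In the case $e=1$ the simple group $S$ is only a composition factor of $\rho(G)/\rho(N)$, i.e.\ a section of a finite subgroup of $\GL_d(\CC)$ over a solvable normal subgroup; it is \emph{not} given as projectively embedded in $\GL_d(\CC)$, so the bound $|S|\leq m_{\CC}(S)^{O(1)}$ you invoke does not apply as stated. Converting ``quotient of a degree-$d$ linear group by a solvable normal subgroup'' into a lower bound on $d$ is exactly the nontrivial content of the Nikolov--Pyber/EMPS lemma, and it cannot be had for free: without the bounded-rank restriction the polynomial bound is false, since the normalizer of an extraspecial $2$-group in $\GL_{2^m}(\CC)$ has composition factor $\Sp_{2m}(2)$ of order roughly $d^{2\log_2 d}$ with $d=2^m$. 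Even granting bounded rank (which you would have to import from the first part, for $|G|$ large), you give no proof of the needed inequality, and your earlier assertion that the minimal nontrivial projective (cocycle-twisted) degree of $H$ is ``comparable'' to $\D_0(H)$ is likewise unquantified, while the exponent $\varepsilon^2/3$ requires explicit control (the square root in the cited lemma is exactly where the paper's $\frac{\varepsilon\delta}{2}$, hence $\frac{\varepsilon^2}{3}$ for large $|G|$, comes from). Either supply a proof of a bound of this type or cite it; as written the key case is asserted rather than proved.
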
 

For the second main result, we study the groups that are $\varepsilon$-quasirandom with a relatively large $\varepsilon$.
It is a widespread belief that $(\PSL_2(q))_{q \geq 4}$ is the ``most quasirandom'' family of groups \cite{MO1,MO2},
but not much is known.
Not only is this actually the case, but we prove that the vast majority of highly quasirandom groups are quasisimple,
shedding a good light on the issue.
	
	\begin{theoremX} \label{thm:B} 
		Let $G$ be a $\frac{1}{5}$-quasirandom group. Then one of the following holds:
		\begin{enumerate}[$(a)$]
		\item \label{thm:B:a} $G \cong (\mathbb{F}_3)^3 \rtimes \SL_3(3)$;
		\item \label{thm:B:b} $G$ is a perfect central extension of $(\mathbb{F}_{2^f})^4 \rtimes \Sp_4(2^f)$, $f \geq 6$;
		\item \label{thm:B:c} $G$ is quasisimple.
		\end{enumerate}
	\end{theoremX}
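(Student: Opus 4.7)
The approach combines Theorem A with Clifford-theoretic constraints on chief factors of the solvable radical $R := \Sol(G)$, followed by a classification of the admissible module structures for the acting simple quotient. Theorem A gives $|G/R| \geq |G|^{1/5}$, and once $|G|$ exceeds the threshold of that theorem, $G/R$ decomposes as a direct product of boundedly many simple groups of Lie type of bounded rank. Groups of order below this threshold have bounded size and will be dealt with by direct inspection, a step that in particular will produce case $(a)$.

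Writing $G/R \cong T_1 \times \cdots \times T_k$, a nontrivial irreducible representation of any single factor $T_i$, pulled back along $G \twoheadrightarrow G/R \twoheadrightarrow T_i$, becomes a nontrivial irreducible representation of $G$ of the same dimension. The $\frac{1}{5}$-quasirandomness of $G$ therefore forces the minimal nontrivial degree $d_{\min}(T_i)$ to satisfy $d_{\min}(T_i) \geq |G|^{1/5} \geq (|T_1|\cdots|T_k|)^{1/5}$ for each $i$. Combined with the explicit minimum-degree representations of finite simple groups of Lie type (as provided by Landazuri--Seitz \cite{LS74}), which for classical $T$ of rank $r$ are of degree on the order of $|T|^{1/(r+1)}$, this quickly forces $k = 1$ and restricts the rank of $T := T_1$ to essentially $r \leq 4$, with analogous bounds in exceptional and twisted types. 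A short explicit list of candidate simple $T$ remains.

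For the core analysis, let $N$ be a minimal normal subgroup of $G$ contained in $R$; then $N$ is elementary abelian of some prime exponent $p$. By Clifford's theorem, every irreducible representation of $G$ whose restriction to $N$ contains $\lambda \in \Irr(N)$ has dimension divisible by the orbit length $|G : I_G(\lambda)|$, so every nontrivial $G$-orbit on $\Irr(N)$ must have size at least $|G|^{1/5}$. If $N \leq Z(G)$, I iterate the argument inside $R/N$; termination with $R = Z(G)$ gives that $G$ is a perfect central extension of $T$, which is case $(c)$. Otherwise $N$ is non-central, and the image of $G$ in $\GL(N)$ contains $T$ acting on $N$ with only very large nontrivial orbits. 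Matching this rigidity against the irreducible modules of each admissible $T$, via known orbit enumerations for classical groups on their natural, twisted, and low-degree modules, forces $(T, N)$ to be either $(\SL_3(3), \mathbb{F}_3^3)$, giving case $(a)$, or $(\Sp_4(2^f), \mathbb{F}_{2^f}^4)$ with $f \geq 6$, giving case $(b)$. A final cohomological bootstrap exploiting that $G$ is perfect then rules out further non-central chief factors in $R$ and completes the identification up to central extension.

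The hardest step will be the module classification. Although Landazuri--Seitz sharply bounds the rank of $T$, for each admissible $T$ one must still enumerate the irreducible $T$-modules whose nonzero orbit structure could satisfy the $|G|^{1/5}$ bound, ruling out natural competitors (symmetric and exterior powers, Frobenius twists, tensor products) by explicit orbit counts combined with comparisons against the minimum-degree complex characters of $T$. The threshold $f \geq 6$ in case $(b)$ should emerge from this computation: for smaller $f$, the minimum complex representation degree of $\Sp_4(2^f)$ falls below $|G|^{1/5}$, violating quasirandomness.
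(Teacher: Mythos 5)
There is a genuine gap at the heart of your plan, namely the Clifford-theoretic step. From ``$\chi(1)$ is divisible by the orbit length $|G:I_G(\lambda)|$'' and ``$\chi(1)\geq |G|^{1/5}$'' you conclude that every nontrivial $G$-orbit on $\Irr(N)$ has size at least $|G|^{1/5}$; this does not follow, since $\chi(1)=|G:I_G(\lambda)|\cdot e$ where $e$ is the degree of a constituent of the inertia subgroup lying over $\lambda$, and $e$ (not the orbit length) may carry the size. Consequently the constraint you feed into the ``module classification'' step --- that $T$ acts on $N$ with only very large orbits --- is unjustified, and that step (already flagged by you as the hardest, and left as an appeal to ``known orbit enumerations'') is exactly where the proof has to happen. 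The paper closes this case by completely different quantitative means: a noncentral abelian $N$ gives a projective \emph{modular} representation of the simple quotient $L$, so $|N|$ is bounded below by $r^{\Di_r(L)}$ in cross-characteristic (where $\Di_r(L)\geq \D_0(L)-1$ for the relevant $L$) or by $q^{\Di_p(L)}$ in natural characteristic; this is played against the upper bound $|N|\leq \D_0(G/N)^{1/\a(G)}/|G/N|$ coming directly from quasirandomness, after first using the gap lemma $\a(G/N)>\a(G)/(1-\a(G))\geq 1/4$ to reduce $L$ to the short explicit list of $\frac14$-quasirandom simple groups. None of these ingredients (the modular-degree lower bounds, the upper bound on $|N|$, the $\frac14$ boost) appears in your outline, and without them the case analysis does not close.

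A second gap is the reduction scheme itself. Theorem A only applies for $|G|$ ``sufficiently large depending on $\varepsilon$'', and you propose to dispose of all smaller groups by direct inspection; but the sizes that actually occur are far beyond computation. For instance, a sporadic quotient such as $O'N$ is compatible with $|G|\leq \D_0(O'N)^5=10\,944^5\approx 10^{20}$, and the paper eliminates these via Jansen's tables of minimal faithful modular degrees, not by machine check (its computer verification stops at $2\cdot 10^6$). Relatedly, your final step --- ``a cohomological bootstrap rules out further non-central chief factors'' --- is not an argument: one must genuinely exclude, e.g., a second noncentral chief factor below the $\Sp_4(2^f)$-module, which the paper does through a minimal-counterexample induction together with the same inequality $|N|\leq \D_0(G/N)^{1/\a(G)}/|G/N|$ applied with $G/N$ equal to the affine group, plus Lemma \ref{lemExtQuasi} in the central case; no group cohomology is involved, and simply being perfect does not by itself forbid such factors.
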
 
	
	The low dimensional representations of the quasisimple groups are known \cite{Tie03},
	thus it is an easy (although laborious) task to recover the full list of the $\frac{1}{5}$-quasirandom groups from Theorem \ref{thm:B}.
	An interesting consequence is the following:
	
	\begin{corollaryX} \label{cor:3/14}
		Every $\frac{3}{14}$-quasirandom group is quasisimple.
	\end{corollaryX}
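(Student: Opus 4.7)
The plan is to derive the corollary directly from Theorem~\ref{thm:B}. Since $\tfrac{3}{14}>\tfrac{1}{5}$, any $\tfrac{3}{14}$-quasirandom group is \emph{a fortiori} $\tfrac{1}{5}$-quasirandom, and so falls into one of the three cases (a), (b), (c) of Theorem~\ref{thm:B}. My task is therefore to rule out cases (a) and (b), so that only the quasisimple case remains.

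For case (a), the isomorphism type $G=(\mathbb{F}_3)^3\rtimes\SL_3(3)$ is fixed and has order $27\cdot 5616=151\,632$. Since $\SL_3(3)=\PSL_3(3)$ has smallest nontrivial complex irreducible character of degree $12$ (immediate from its character table), inflation through the quotient $G\twoheadrightarrow \SL_3(3)$ produces an irreducible character of $G$ of degree $12$. A direct numerical check gives $12<|G|^{3/14}$, so $G$ is not $\tfrac{3}{14}$-quasirandom.

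For case (b), set $q=2^f$ with $f\geq 6$, let $H=(\mathbb{F}_q)^4\rtimes\Sp_4(q)$, and let $G$ be a perfect central extension of $H$. From the small-degree tables for $\Sp_4(q)$ (cf.\ \cite{Tie03}), the smallest nontrivial complex irreducible character of $\Sp_4(q)$ has degree $\tfrac{1}{2}q(q-1)^2$; inflation through $G\twoheadrightarrow H\twoheadrightarrow\Sp_4(q)$ produces an irreducible character of $G$ of the same degree. On the other hand $|H|=q^8(q^2-1)(q^4-1)$, and $|G|\geq|H|$ gives $|G|^{3/14}\geq|H|^{3/14}$, a quantity of order $q^3$. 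A direct estimate, which amounts to the asymptotic comparison $\tfrac{1}{2}q(q-1)^2\sim\tfrac{1}{2}q^3$ versus $|H|^{3/14}\sim q^3$, shows $\tfrac{1}{2}q(q-1)^2<|H|^{3/14}$ for every $q\geq 64$, again contradicting $\tfrac{3}{14}$-quasirandomness.

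I expect the main obstacle to be the inequality in case (b): both the minimum character degree and $|G|^{3/14}$ scale like $q^3$, so the computation has to track the leading constants and lower-order terms carefully. The hypothesis $f\geq 6$ coming from Theorem~\ref{thm:B} is precisely what supplies the required slack (asymptotically a factor of $2$) between the two sides.
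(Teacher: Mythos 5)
Your proof is correct and matches the paper's intended derivation of Corollary \ref{cor:3/14}: one deduces it from Theorem \ref{thm:B} by checking that the groups in cases \ref{thm:B:a} and \ref{thm:B:b} are not $\frac{3}{14}$-quasirandom, which is exactly the computation the paper records (Example \ref{ex:ASp} for the affine symplectic family, and the direct check $12 < 151\,632^{3/14}$ for $(\mathbb{F}_3)^3 \rtimes \SL_3(3)$), with your inflation bound $\D_0(G)\le \D_0(\Sp_4(q))$ an adequate substitute for the exact equality of Lemma \ref{lemAffSp}. One small correction to your closing remark: the slack in case \ref{thm:B:b} comes from the factor $\frac{1}{2}$ in $\D_0(\Sp_4(q))=\frac{1}{2}q(q-1)^2$ (the inequality $\frac{1}{2}q(q-1)^2<|H|^{3/14}$ in fact holds for all even $q\ge 4$), not from the hypothesis $f\ge 6$, which is only there to make these groups $\frac{1}{5}$-quasirandom in the first place.
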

	
	The constant $\frac{3}{14}$ is best possible, precisely because of the family in Theorem \ref{thm:B}\ref{thm:B:b}.
	On the other hand, the value $\frac{1}{5}$ is the result of a compromise,
	as with our machinery and more work one can obtain a deeper classification.
	There are some reasons to prefer $\frac{1}{5}$:
	first, it is small enough to provide Corollary \ref{cor:3/14}.
	Second, it makes the theorem sharp in some sense,
	because there exist infinitely many nonquasisimple groups which are $(\frac{1}{5} -\varepsilon)$-quasirandom,
	for any $\varepsilon>0$ (see Section \ref{sec3}).
	It also makes the proof a little cleaner,
	because every $\frac{1}{5}$-quasirandom group has a unique maximal normal subgroup (Lemma \ref{lemUniqueM}).
	We point out that similar methods are useful to study quasirandomnes at other regimes different from the exponential,
	which could be of interest as well (see also Remark \ref{remAtt}).
	The shallow version of Theorem \ref{thm:B} is the following:
	
	\begin{corollaryX} \label{cor:1/3}
		The only $\frac{1}{3}$-quasirandom groups are the sporadic simple groups $J_1$ and $O'N$.
	\end{corollaryX}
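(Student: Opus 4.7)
The plan is to invoke Theorem~\ref{thm:B} and carry out a case-by-case analysis. A $\frac{1}{3}$-quasirandom group is in particular $\frac{1}{5}$-quasirandom, so it lies in one of the classes (a), (b), (c) of Theorem~\ref{thm:B}. Classes (a) and (b) are the sharpness families for the constant $\frac{1}{5}$ discussed in Section~\ref{sec3}, hence they carry non-trivial irreducible characters of degree $\asymp |G|^{1/5}$: for (a), inflation of a low-dimensional representation of $\SL_3(3)$ already gives a non-trivial irreducible character of $(\mathbb{F}_3)^3\rtimes\SL_3(3)$ of degree at most $13$, while $|G|^{1/3}\approx 53.3$; for (b), induction of a non-trivial linear character from the elementary abelian normal subgroup $(\mathbb{F}_q)^4$, with $q=2^f$ and $f\ge 6$, yields an irreducible character of degree at most $q^4$, whereas $|G|^{1/3}\asymp q^{14/3}$. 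Neither family can therefore produce a $\frac{1}{3}$-quasirandom group.

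It remains to treat the quasisimple case (c). Setting $S:=G/Z(G)$, I would argue by the isomorphism type of the non-abelian simple group $S$, relying on Tiep's classification \cite{Tie03} of small-dimensional irreducible representations of finite quasisimple groups together with the Landazuri--Seitz lower bounds \cite{LS74}. For $S=A_n$ with $n\ge 5$, the $(n-1)$-dimensional natural representation lifts to $G$ and gives $d(G)\le n-1<(n!/2)^{1/3}$, and the basic spin representations of the double covers $2.A_n$ must be checked separately for small $n$ but never rescue the inequality. For $S$ of Lie type of rank $r$ over $\mathbb{F}_q$, the smallest faithful complex irreducible representation of a cover has degree polynomial in $q$ of degree linear in $r$, while $|G|$ is polynomial in $q$ of degree quadratic in $r$; so $d(G)^3\ge|G|$ fails outside an explicit short list of small-rank or small-$q$ exceptions (e.g.\ $\PSL_2(q)$, $\PSL_3(q)$, $\mathrm{PSU}_3(q)$, $\PSp_4(q)$, ${}^2B_2(q)$), each of which is eliminated by direct inspection of its character degrees.

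The main obstacle, and the most laborious part of the argument, is the sporadic case: one must verify whether $d(G)\ge |G|^{1/3}$ for each of the $26$ sporadic simple groups, the Tits group, and each of their non-trivial Schur covers, using the character degrees tabulated in the \textsc{Atlas}. A direct numerical comparison leaves only two candidates. The Janko group $J_1$ has trivial Schur multiplier and minimal non-trivial degree $56$, while $|J_1|^{1/3}<56$. The O'Nan group $O'N$ has minimal degree $10{,}944>|O'N|^{1/3}\approx 7{,}723$; its proper central cover $3.O'N$ must be discarded because the characters of $O'N$ inflate to $3.O'N$, yet $|3.O'N|^{1/3}\approx 11{,}140>10{,}944$. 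This yields the stated classification.
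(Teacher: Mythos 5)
Your route has a structural problem in the context of this paper: you deduce Corollary \ref{cor:1/3} from Theorem \ref{thm:B}, but here Theorem \ref{thm:B} is itself proved \emph{using} Corollary \ref{cor:1/3} (the estimate $|L|>\D_0(L)^3$ for the simple quotient $L$ in case $(i)$, and Lemma \ref{lemUniqueM}, both invoke it), which is precisely why the authors prove the corollary first, as a ``baby'' Theorem \ref{thm:B}, rather than as a consequence of it. So, as written, your argument is circular and cannot be spliced into the paper's logical order. The paper's own proof is short and quite different: if $G$ is a minimal counterexample with $\a(G)\geq\frac{1}{3}$, then (by the Appendix data that $J_1$ and $O'N$ are the only finite simple groups with quasirandom degree at least $\frac{1}{3}$, and minimality) any quotient by a nontrivial proper normal subgroup $N$ is $J_1$ or $O'N$; inequality (\ref{eqN}) then forces $|N|\leq 56^3/|J_1|\approx 1.0003$ in the first case and $|N|\leq 10\,944^3/|O'N|\approx 2.84$ in the second, and the surviving possibility of a perfect central extension of $O'N$ by $C_2$ is killed because the Schur multiplier of $O'N$ has order $3$. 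This disposes of all covers at once, with no case-by-case inspection of quasisimple groups.

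Even taking Theorem \ref{thm:B} as a black box, your case $(c)$ is only a sketch: the whole content of the statement is exactly the inspection of quasisimple groups and their covers that you defer to ``direct numerical comparison'', and this is the laborious work the paper's minimal-counterexample argument is designed to avoid. Some details you do give are off. For $A_5$ the claimed inequality $n-1<(n!/2)^{1/3}$ fails ($4>60^{1/3}$); one must instead use $\D_0(A_5)=3$. For the family of Theorem \ref{thm:B}\ref{thm:B:b}, inducing a nontrivial linear character of $(\mathbb{F}_q)^4$ yields degree $|G:V|\approx q^{10}$, not at most $q^4$; the correct quick bound is the irreducible constituent of degree $q^4-1$ afforded by $2$-transitivity, or better Lemma \ref{lemAffSp}, which gives $\D_0=\frac{1}{2}q(q-1)^2$ and, after inflation to any perfect central extension, shows these groups are at most $\frac{3}{14}$-quasirandom. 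Your exclusion of $3\cdot O'N$ by inflating the degree-$10\,944$ character and comparing with $|3\cdot O'N|^{1/3}$ is correct, and mirrors the spirit of the paper's Schur-multiplier step, but the overall proposal needs an independent proof of the quasisimple classification (or of Theorem \ref{thm:B}) to be a valid proof of the corollary here.
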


	We will actually prove Corollary \ref{cor:1/3} as a ``baby'' Theorem \ref{thm:B}.
	The philosophy behind the proofs is that quasirandom groups are pretty rigid,
	and lend themselves very little to nontrivial extensions, especially the noncentral ones.
	To make this idea really say something, we use various properties of the finite simple groups and their representations.
	Particularly for Theorem \ref{thm:B}, we need a detailed (i.e. quantitative) analysis of quasirandom degrees,
	modular and projective representations.
	
	The paper is organized as follows.
	In Section \ref{sec2} we introduce the (exponential) quasirandom degree $\a(G)$ of a finite group,
	and give basic results about quasisimple groups, modular and projective representations.
	In Section \ref{sec3}, we provide some highly quasirandom nonquasisimple groups:
	they are of affine type as in Theorem \ref{thm:B} \ref{thm:B:a}-\ref{thm:B:b}.
	We prove the main theorems in Section \ref{sec4}.
	Fundamental to our discussion is the knowledge of the quasirandom degrees of the finite simple groups,
	so we collect these data in the Appendix \ref{appendix}.

    \section{Preliminaries} \label{sec2}

\subsection{Minimal degree of a nontrivial representation} 

In this paper, every group is finite.
For a nontrivial finite group $G$,
we denote by $\D_0(G)$ the minimal degree of a nontrivial irreducible complex representation of $G$, that is,
$$ \D_0(G) \> := \> \min \left\{ \dim(\pi) \mid \pi \in \Irr(G) \setminus \{1\} \right\}  . $$

Observe that the veracity of Corollaries \ref{cor:3/14} and \ref{cor:1/3} implies that $\D_0(G) < |G|^{0.3464}$ for every finite group $G$, and $\D_0(G) < |G|^{0.2143}$ if $G$ is not quasisimple.

We start with the following, which appears as \cite[Ex. 1.3.2]{Tao15}.

\begin{lemma} \label{lemExt1}
	Let $N \lhd G$ be a nontrivial proper normal subgroup.
	Then
	$$ \D_0(G/N) \> \geq \> \D_0(G) . $$
	If the previous inequality is strict, then
	$$ \D_0(G) \> \geq \> \D_0(N) . $$
\end{lemma}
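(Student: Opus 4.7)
The plan is to split the statement along the dichotomy it already suggests: the first inequality comes from inflation, while the second one reduces to Clifford theory once we know that the minimum for $G$ is not attained on a representation that is trivial on $N$.

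First I would observe that every nontrivial irreducible complex representation $\bar\pi$ of $G/N$ can be inflated via the quotient map $G \to G/N$ to a nontrivial irreducible representation $\pi$ of $G$ of the same dimension. Since $\D_0(G)$ is the minimum of $\dim \pi$ over a strictly larger class of representations of $G$, we immediately get $\D_0(G/N) \geq \D_0(G)$. This also clarifies when equality can fail: $\D_0(G/N) > \D_0(G)$ exactly when every irreducible $\pi \in \Irr(G) \setminus \{1\}$ realizing $\D_0(G)$ has $N \not\subseteq \ker \pi$, i.e.\ is not inflated from $G/N$.

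Now assume the inequality is strict, and pick such a $\pi \in \Irr(G)\setminus\{1\}$ with $\dim \pi = \D_0(G)$ and $N \not\subseteq \ker \pi$. I would apply Clifford's theorem to the restriction $\pi|_N$: it decomposes as $e \cdot (\sigma_1 \oplus \cdots \oplus \sigma_k)$, where the $\sigma_i \in \Irr(N)$ form a single $G$-orbit and all have the same dimension. The trivial representation $1_N$ is fixed by the $G$-action on $\Irr(N)$, so if one $\sigma_i$ were trivial, all of them would be, and then $N$ would lie in $\ker \pi$, contradicting our choice of $\pi$. Therefore each $\sigma_i$ is a nontrivial irreducible representation of $N$, and in particular $\dim \sigma_i \geq \D_0(N)$.

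Finally, combining $\dim \pi \geq \dim \sigma_i$ (since $\sigma_i$ appears as a constituent of $\pi|_N$) with $\dim \sigma_i \geq \D_0(N)$ gives $\D_0(G) = \dim \pi \geq \D_0(N)$, as required. No step is really an obstacle here: the only thing to be careful about is the $G$-invariance of $1_N$, which is what rules out the trivial constituent and lets Clifford's theorem do the work.
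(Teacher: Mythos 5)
Your proposal is correct and follows essentially the same route as the paper: inflation gives $\D_0(G/N)\geq \D_0(G)$, and in the strict case the minimizing $\pi$ cannot contain $N$ in its kernel, so its restriction to $N$ has a nontrivial irreducible constituent of dimension at least $\D_0(N)$. The appeal to Clifford's theorem is harmless but unnecessary: complete reducibility alone already shows that a restriction which is not trivial on $N$ must contain a nontrivial irreducible constituent, which is all the paper uses.
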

\begin{proof}
Each nontrivial irreducible representation of $G/N$ provides a nontrivial representation of $G$.
The first inequality follows.
For the second one, suppose there exists $\pi \in \Irr(G)$ such that $\dim(\pi) = \D_0(G) < \D_0(G/N)$. 
Observe that the restriction $\pi_{\downarrow_N}$ of $\pi$ to $N$ is nontrivial: otherwise $N \subseteq Ker(\pi)$,
and $\pi$ would induce a nontrivial irreducible representation of $G/N$ whose degree is $\dim(\pi)$,
against the minimality of $\D_0(G/N)$.
Therefore, $\pi_{\downarrow N}$ contains a nontrivial irreducible representation of $N$,
and so $\dim(\pi) = \dim ( \pi_{\downarrow_N} ) \geq \D_0(N)$.
\end{proof} 

\begin{lemma} \label{lemActions}
If $G$ acts nontrivially on a finite set $\Omega$, then $\D_0(G) \leq |\Omega|-1$.
\end{lemma}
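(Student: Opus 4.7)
The plan is to exhibit a nontrivial complex representation of $G$ of dimension $|\Omega|-1$, from which the bound on $\D_0(G)$ follows after extracting an irreducible component.

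First I would consider the permutation representation $\pi\colon G\to \GL(\CC^{\Omega})$, where $\CC^{\Omega}$ has basis indexed by the elements of $\Omega$ and $G$ permutes the basis according to its action. This decomposes naturally as $\CC^{\Omega}=V_0\oplus V_1$, where $V_0$ is the one-dimensional subspace spanned by $\sum_{\omega\in\Omega}\omega$ (carrying the trivial representation) and $V_1$ is the $(|\Omega|-1)$-dimensional $G$-invariant complement consisting of vectors whose coordinates sum to zero. Both summands are $G$-stable since $G$ acts by permuting coordinates.

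Next I would argue that $V_1$ carries a \emph{nontrivial} representation of $G$. Indeed, if $G$ acted trivially on $V_1$, then combined with the trivial action on $V_0$ it would act trivially on $\CC^{\Omega}$ as well, contradicting the hypothesis that the action on $\Omega$ is nontrivial. Decomposing $V_1$ into irreducible $G$-subrepresentations, at least one of these constituents must be nontrivial, since a direct sum of trivial representations is itself trivial. Such a constituent has dimension at most $\dim V_1 = |\Omega|-1$, and so $\D_0(G)\leq |\Omega|-1$.

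There is really no obstacle to speak of; the only subtlety is noting that nontriviality of the permutation action is exactly what forces $V_1$ to carry a nontrivial $G$-action, which is immediate from the complementary decomposition above.
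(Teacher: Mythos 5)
Your proof is correct and follows essentially the same route as the paper: both use the permutation representation on $\CC^{\Omega}$, split off the trivial summand spanned by the all-ones (diagonal) vector, and extract a nontrivial irreducible constituent from the $(|\Omega|-1)$-dimensional complement. You simply spell out the details that the paper leaves implicit.
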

\begin{proof}
Recall that $\Sym(n)$ has a natural embedding into $\GL_n$, and that any permutation representation contains the trivial representation,
because the diagonal subspace is fixed by every element of $G$.
\end{proof}

We can obtain some useful bounds by considering the natural actions of $G$.

\begin{corollary} \label{corActions}
The following inequalities hold:
\begin{enumerate}[$(i)$]
	\item if $H < G$ is a proper subgroup, then
	\[ |G:H| \, \geq \, \D_0(G/\mathrm{Core}_G(H)) +1 \,; \]
	\item if $N \lhd G$ is a noncentral normal subgroup, then
	\[ |N| \, \geq \, \D_0(G/\C_G(N)) +2 \,; \]
	\item if $C \subseteq G$ is a nontrivial conjugacy class, then
	\[ |C| \, \geq \, \D_0(G) +1 \,.\]
\end{enumerate}
\end{corollary}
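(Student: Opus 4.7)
All three parts follow from Lemma \ref{lemActions} applied to suitable group actions, so the plan reduces to identifying, in each case, an appropriate action of $G$ (or of a quotient) on a set of the right size and pinning down its kernel.

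For (i), the natural choice is the left multiplication action of $G$ on the coset space $G/H$: it has exactly $|G{:}H|$ points, it is nontrivial because $H$ is proper, and its kernel is (by a standard fact) $\mathrm{Core}_G(H)$. Hence $G/\mathrm{Core}_G(H)$ acts faithfully and nontrivially on $G/H$, and Lemma \ref{lemActions} delivers the bound directly. For (iii), $G$ acts transitively on $C$ by conjugation; the action is nontrivial precisely because a nontrivial class has $|C|\geq 2$ (otherwise the claimed inequality could not hold anyway), so Lemma \ref{lemActions} yields $\D_0(G)\leq |C|-1$.

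The only mildly subtle point is (ii), where one has to explain the $+2$ instead of the naive $+1$. The conjugation action of $G$ on $N$ has kernel $\C_G(N)$ and is nontrivial because $N$ is noncentral, but this would yield only $\D_0(G/\C_G(N))\leq |N|-1$. To gain the extra unit, the plan is to restrict the action to $N\setminus\{1\}$, a set of size $|N|-1$. This restricted action remains nontrivial: conjugation always fixes $1$, so any element centralising every non-identity element of $N$ automatically centralises all of $N$ and hence lies in $\C_G(N)$; consequently $G/\C_G(N)$ acts faithfully and nontrivially on $N\setminus\{1\}$, and Lemma \ref{lemActions} now gives the desired $\D_0(G/\C_G(N))\leq |N|-2$. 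This restriction is the one place where a moment's thought is required; everything else is bookkeeping.
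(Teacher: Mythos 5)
Your proposal is correct and matches the paper's proof: all three parts are obtained by applying Lemma \ref{lemActions} to the coset action of $G/\mathrm{Core}_G(H)$, the conjugation action of $G/\C_G(N)$ on $N\setminus\{1\}$ (which is exactly how the paper gets the $+2$), and the conjugation action of $G$ on $C$. No further comment is needed.
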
 
\begin{proof}
	Use Lemma \ref{lemActions} on:
	the action of $G/\mathrm{Core}_G(H)$ on the right cosets of $H$ by multiplication;
	the action of $G/\C_G(N)$ on $N \setminus \{1\}$ by conjugation;
	the action of $G$ on $C$ by conjugation.
\end{proof} 

We would like to mention some connections between $\D_0(G)$ and other important parameters associated to a finite group:
the number of conjugacy classes $\kk(G)$, the representation growth $\rr_n(G)$,
and the minimal index of a proper subgroup, i.e. $\iot(G) := \min_{H<G} |G:H|$.

\begin{remark}
By standard results in representation theory, we can write
$$ |G| =  \sum_{\pi \in \Irr(G)} \dim(\pi)^2 \geq 1 + (\kk(G)-1) \D_0(G)^2 . $$
Let $G$ be an $\varepsilon$-quasirandom group.
It follows that $\varepsilon < \frac{1}{2}$, and $\kk(G) \leq |G|^{1-2\varepsilon}$.
Moreover, the number $\rr_n(G)$ of the inequivalent irreducible representations of degree $n$ is polynomially bounded,
namely $\rr_n(G) \leq n^{\varepsilon^{-1} -2}$ for all $n$ (see \cite[Lemma 25]{Sab23}).
\end{remark} 

\begin{remark} 
		Using the Classification of the Finite Simple Groups,
		Nikolov and Pyber \cite{NP11} have proved that there is an absolute constant $c$ such that
		$$ \iot(G) \> \leq \> c \> \D_0(G)^2 $$
		for every finite perfect group $G$.
		A converse inequality, $\D_0(G) < \iot(G)$, follows from the previous lemmas.
		Lev \cite{Lev92} proved that $\iot(G) \leq |G|^{0.5}$ for every finite group $G$ whose order is not a prime
		(equality is achieved by groups of order $p^2$).
	\end{remark}

\subsection{Quasirandom degree}

Let $G$ be a nontrivial finite group. We introduce the {\itshape (exponential) quasirandom degree} of $G$ by
$$ \a(G) \> := \> \log_{|G|} \D_0(G) . $$
By definition, $\D_0(G)=|G|^{\a(G)}$,
and $\a(G)$ is the maximum $\varepsilon \in [0,\frac{1}{2})$ such that $G$ is an $\varepsilon$-quasirandom group.
Observe that $\a(G) >0$ if and only if $G$ is perfect.
We now report the theorem of Nikolov and Pyber mentioned in the Introduction:

\begin{theorem}[Theorem 3.1 in \cite{NP11}] \label{thChar}
Let $G$ be a finite perfect group.
The following statements are roughly equivalent,
in the sense that if one statement holds for a constant $\varepsilon_i>0$,
then all others hold with constant $\varepsilon_j=c_{i,j} \varepsilon_i$, for some $c_{i,j}>0$.
\begin{itemize}
\item $G$ is $\varepsilon_1$-quasirandom;
\item every proper subgroup of $G$ has index at least $|G|^{\varepsilon_2}$;
\item for every subset $S$ of size at least $|G|^{1-\varepsilon_3}$ we have $S^3=G$;
\item every product-free subset of $G$ has size at most $|G|^{1-\varepsilon_4}$.
\end{itemize}
\end{theorem}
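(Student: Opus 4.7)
The plan is to close a loop of implications among the four conditions, with constants shifting by a bounded factor at each step; label them (i)--(iv) in the order listed. First I would establish (i) $\Leftrightarrow$ (ii). The direction (i) $\Rightarrow$ (ii) is immediate from the preliminaries: given a proper $H < G$, the quotient $G/\mathrm{Core}_G(H)$ is nontrivial, so by Lemma \ref{lemExt1} its quasirandom degree is at least $\D_0(G) \geq |G|^\varepsilon$, and Corollary \ref{corActions}(i) yields $|G:H| \geq \D_0(G/\mathrm{Core}_G(H))+1 \geq |G|^\varepsilon$. The converse (ii) $\Rightarrow$ (i) is the CFSG-dependent half, for which I would invoke the Nikolov--Pyber bound $\iot(G) \leq c\,\D_0(G)^2$ cited in the second remark of the preliminaries: this yields $\D_0(G) \geq c^{-1/2}|G|^{\varepsilon/2}$, hence $\a(G) \geq \varepsilon/3$ once $|G|$ is large compared to $c$.

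To bring in the subset-theoretic conditions, the main tool is the Gowers--Babai--Nikolov--Pyber mixing inequality: if $A, B, C \subseteq G$ satisfy $|A||B||C| > |G|^3/\D_0(G)$, then $ABC = G$. Setting $A=B=C=S$ gives (iii) from (i) with constant $\varepsilon/3$. For (iv), given a product-free $S$, I would apply the mixing inequality to $(S,S,S^{-1})$: product-freeness says there are no $a,b,c \in S$ with $ab=c$, i.e., $1 \notin SSS^{-1}$, so $SSS^{-1} \neq G$; the contrapositive then forces $|S|^3 \leq |G|^3/\D_0(G) \leq |G|^{3-\varepsilon}$, giving (iv) with constant $\varepsilon/3$.

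To close the cycle it suffices to push (iii) and (iv) back to (ii). The step (iii) $\Rightarrow$ (ii) is immediate: a proper subgroup $H$ of index less than $|G|^\varepsilon$ would have $|H| \geq |G|^{1-\varepsilon}$ yet $H^3 = H \neq G$, contradicting (iii). For (iv) $\Rightarrow$ (ii), I would first observe that every nontrivial coset $gN$ of a proper normal subgroup $N \lhd G$ of index at least $3$ is product-free, because $(gN)(gN)=g^2N$ is disjoint from $gN$ when $g \notin N$; since perfect groups have no index-$2$ normal subgroups, (iv) forbids proper normal subgroups of index below $|G|^\varepsilon$. Extending this from normal to arbitrary subgroups is the subtlest step.

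The main obstacle I anticipate is exactly this last extension: the naive estimate $[G:\mathrm{Core}_G(H)] \leq [G:H]!$ loses too much to preserve a power-of-$|G|$ bound. The cleanest workaround I would attempt is to bypass (ii) and prove (iv) $\Rightarrow$ (i) directly, by showing that a low-dimensional nontrivial representation of $G$ enables one to assemble a product-free set of size at least $|G|^{1-\varepsilon/c}$ from level sets of a suitable character (or from cosets of the kernel of a virtual quotient), thereby contradicting (iv); once (i) is recovered, (ii) follows from the equivalence already proved. In all these routes the constants shift only by bounded multiplicative factors, matching the rough equivalence claimed in the statement.
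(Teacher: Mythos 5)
First, note that the paper itself does not prove this statement: it is quoted verbatim from Nikolov--Pyber \cite{NP11}, so there is no in-paper argument to match yours against; what you have written is a reconstruction of the standard proof. Most of it is sound and follows the expected route: (i)$\Rightarrow$(ii) via Lemma \ref{lemExt1} and Corollary \ref{corActions}(i); (ii)$\Rightarrow$(i) via the CFSG-dependent bound $\iot(G)\leq c\,\D_0(G)^2$ for perfect groups; (i)$\Rightarrow$(iii) and (i)$\Rightarrow$(iv) via the Gowers mixing inequality applied to $(S,S,S)$ and $(S,S,S^{-1})$ respectively; and (iii)$\Rightarrow$(ii) by taking $S=H$. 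All of these steps are correct and lose only bounded factors in the exponent.

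The genuine gap is the closing implication from (iv), which you flag yourself but do not repair. Your proposed workaround -- building a large product-free set directly from ``level sets of a suitable character'' of a low-dimensional representation -- is not a workable argument as stated: no such direct construction is known, and it cannot be expected to be elementary, since passing from a small nontrivial degree to combinatorial structure is exactly where the CFSG input sits; the known constructions of large product-free sets all go through cosets of small-index subgroups. The irony is that the step you describe as the ``subtlest'' has a one-line solution requiring no normality at all: for any proper subgroup $H<G$ and any $x\notin H$, the left coset $xH$ is product-free, because $(xh_1)(xh_2)=xh_3$ would force $x=h_1^{-1}h_3h_2^{-1}\in H$. Hence if some proper subgroup has index $n<|G|^{\varepsilon}$, then $xH$ is a product-free set of size $|G|/n>|G|^{1-\varepsilon}$, so (iv)$\Rightarrow$(ii) holds with no loss in the constant, and your cycle closes through the (ii)$\Rightarrow$(i) step you already have. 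Replace the speculative (iv)$\Rightarrow$(i) route (and the unnecessary detour through normal subgroups and $\mathrm{Core}_G(H)$) with this observation and the proof is complete.
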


Let $G$ be a finite group and $N \lhd G$ a proper normal subgroup. We can write
$$ \a(G) \> = \> \a(G/N) \cdot \frac{\log \D_0(G)}{\log \D_0(G/N)} \cdot \frac{\log|G/N|}{\log|G|} . $$
(We remark here that, when we consider a ratio of logarithms, the base can be chosen arbitrarily.)
It is immediate from the equality above and Lemma \ref{lemExt1} that $\a(G/N) \geq \a(G)$, with equality if and only if $N$ is trivial.
We will use this fact throughout the article without mentioning it.

We now give a simple but very useful inequality.
Let $N \lhd G$ be a proper normal subgroup. 
By the definition of quasirandom degree, and Lemma \ref{lemExt1},
$$ |G| \> = \> \D_0(G)^{1/\a(G)} \> \leq \> \D_0(G/N)^{1/\a(G)} . $$
Hence, substituting $|G|=|G/N||N|$, we obtain
\begin{equation}
\label{eqN}
	|N| \> \leq \> \frac{\D_0(G/N)^{1/\a(G)}}{|G/N|} .
\end{equation} 
What makes (\ref{eqN}) so useful is that in many occasions we barely know that $\a(G)$ is large, but we have a good knowledge of $G/N$.
The next lemma provides a gap between $\a(G)$ and $\a(G/N)$ when $N$ is abelian but noncentral,
and it greatly simplifies the proof of Theorem \ref{thm:B}.

\begin{lemma} \label{lemImprove}
	Let $N \lhd G$ be a noncentral abelian normal subgroup.
	Then
	$$ \a(G/N) \> > \> \frac{\a(G)}{1-\a(G)} . $$
\end{lemma}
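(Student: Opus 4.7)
The plan is to rewrite the target inequality $\a(G/N) > \a(G)/(1-\a(G))$ in a form that isolates a single arithmetic fact about $|N|$, and then harvest that fact from Corollary \ref{corActions}.

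First I would unpack the right-hand side. Since $\a(G) = \log \D_0(G)/\log|G|$, we have
$$ \frac{\a(G)}{1-\a(G)} \> = \> \frac{\log \D_0(G)}{\log|G|-\log \D_0(G)} \> = \> \frac{\log \D_0(G)}{\log(|G|/\D_0(G))}. $$
Meanwhile $\a(G/N) = \log \D_0(G/N)/\log|G/N|$. Lemma \ref{lemExt1} gives $\log \D_0(G/N) \geq \log \D_0(G)$, so the numerator inequality is automatic; the whole statement therefore reduces to the denominator inequality $\log|G/N| < \log(|G|/\D_0(G))$, which is exactly
$$ |N| \> > \> \D_0(G). $$

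The second step is to prove this lower bound, and this is where noncentrality of $N$ enters. Pick any $n \in N \setminus Z(G)$; its $G$-conjugacy class $C = n^G$ lies in $N$ because $N$ is normal, and is nontrivial by the choice of $n$. Corollary \ref{corActions}\,(iii) then gives $|C| \geq \D_0(G)+1$, so $|N| \geq |C| > \D_0(G)$, with the strict inequality we need.

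Assembling the pieces,
$$ \a(G/N) \> = \> \frac{\log \D_0(G/N)}{\log|G/N|} \> \geq \> \frac{\log \D_0(G)}{\log|G/N|} \> > \> \frac{\log \D_0(G)}{\log(|G|/\D_0(G))} \> = \> \frac{\a(G)}{1-\a(G)}. $$
There is no serious obstacle: the only substantive input is the conjugacy-class bound from Corollary \ref{corActions}, and the rest is the identity $\a(G)/(1-\a(G)) = \log \D_0(G)/\log(|G|/\D_0(G))$, which is what converts a multiplicative bound on $|N|$ into the desired multiplicative gap between $\a(G)$ and $\a(G/N)$. Notice that the argument actually uses only noncentrality of $N$, not abelianity; the paper may exploit abelianity in a parallel route via Corollary \ref{corActions}\,(ii) applied to $G/\C_G(N)$, but the conjugacy-class route above already suffices.
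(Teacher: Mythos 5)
Your proof is correct, and it takes a mildly different route from the paper's. The paper uses the abelian hypothesis to get $N \subseteq \C_G(N) \neq G$, then combines the first inequality of Lemma \ref{lemExt1} with Corollary \ref{corActions}(ii) to obtain the stronger bound $|N| > \D_0(G/\C_G(N)) \geq \D_0(G/N)$, which it feeds into (\ref{eqN}) and finishes by comparing exponents. You instead rewrite $\a(G)/(1-\a(G))$ as $\log \D_0(G)/\log(|G|/\D_0(G))$, so that only the weaker bound $|N| > \D_0(G)$ is needed, and you obtain that from Corollary \ref{corActions}(iii) applied to the conjugacy class of a noncentral element of $N$. Your remark that abelianity is not used is accurate: your argument works for any proper noncentral normal subgroup (properness being automatic when $N$ is abelian and noncentral), whereas the paper's route needs $N \subseteq \C_G(N)$; on the other hand, the paper's argument yields the sharper intermediate inequality $|N| > \D_0(G/N)$ and runs through the inequality (\ref{eqN}) that is reused throughout Section \ref{sec4}, so it fits the general template of the later proofs. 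One shared caveat: the final strict comparison in your chain needs $\log \D_0(G) > 0$, i.e. $G$ perfect; the paper's proof makes the same implicit assumption through the exponent $1/\a(G)$, and this is the only regime in which the lemma is ever applied.
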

\begin{proof}
	Since $N$ is abelian, we have $N \subseteq \C_G(N) \neq G$.
	Combining Lemma \ref{lemExt1}, Corollary \ref{corActions}(ii), and (\ref{eqN}), we write
	\[\begin{split}
		\D_0(G/N) 
		&\leq \D_0(G/\C_G(N))
		\\&< |N|
		\\&\leq \frac{\D_0(G/N)^{1/\a(G)}}{|G/N|} . 
	\end{split} \, \] 
	Writing $\D_0(G/N)=|G/N|^{\a(G/N)}$, we obtain
	$$ |G/N|^{\a(G/N)} < |G/N|^{\frac{\a(G/N)}{\a(G)}-1} , $$ 
	and the result is proven by comparing the exponents. 
\end{proof}  

Lemma \ref{lemImprove} may fail if $N$ is central:
for instance, we have
$$ \a(\SL_2(7)) \approx 0.1889, \quad \a(\PSL_2(7)) \approx 0.2144, $$
and 
$$ \a(\PSL_2(7)) \> < \> \frac{\a(\SL_2(7))}{1-\a(\SL_2(7))} . $$
Such a phenomenon suggests that quasisimple groups are strong candidates looking for highly quasirandom groups,
because $\a(G)$ can be very close to $\a(G/\Z(G))$.

We now estimate the quasirandom degree of a direct product.

\begin{lemma} \label{lemDP}
The following holds:
\begin{itemize}
\item[(i)] If $\a(A) \leq \a(B)$, then $\a(A \times B) \leq \frac{\a(B)}{2}$;

\item[(ii)] if $(G_i)_{i=1}^n$ are finite groups, then $\a\left(\prod_{i=1}^n G_i\right) < \frac{1}{n}$.
\end{itemize}
\end{lemma}
\begin{proof}
We have $\D_0(A \times B) = \min \{ \D_0(A), \D_0(B) \}$,
so that we can write
\[\begin{split}
	\a(A \times B)
	&= \frac{\log \left( \min \{\D_0(A),\D_0(B)\} \right)} {\log(|A||B|)}
	\\&= \frac{\log \left( \min \{|A|^{\a(A)},|B|^{\a(B)}\} \right)} {\log|A|+\log|B|}
	\\&\leq \a(B) \cdot \frac{\min\{\log|A|,\log|B|\}} {\log|A| + \log|B|} .
\end{split} \, \]
Point (i) follows.
For (ii), observe that there is a group, say $G_n$, satisfying $\log |G_n| \leq \frac{1}{n} \sum_{i=1}^n \log |G_i|$.
It follows from the previous computation that
$$ \a\left( \prod_{i=1}^n G_i \right) <
 1\cdot \frac{\min\{\sum_{i=1}^{n-1} \log|G_i|, \log|G_n|\}} {\sum_{i=1}^n \log|G_n|} \leq \frac{1}{n} . $$
 The proof is complete.
\end{proof}

\subsection{Quasisimple groups} \label{sec:extensions}

We recall that a {\itshape central extension} of a group $Q$ is a short exact sequence
$$ 1 \> \to \> N \> \hookrightarrow \> G \> \mapsto \> Q \> \to \> 1 $$
where $N \subseteq \Z(G)$.
If $G$ is perfect, then $G$ is a {\itshape perfect central extension}.
A finite group is {\itshape quasisimple} if it is a perfect central extension of a nonabelian finite simple group,
that is, $G$ is perfect and $G/\Z(G)$ is simple.

It is a key property of perfect groups that, if $G$ is perfect,
then there is a unique universal perfect central extension of $G$, which we denote by $\widetilde{G}$.
This extension is obtained by choosing $N$ isomorphic to the {\itshape Schur multiplier} of $G$,
that is, the second homology group $H^2(G,\mathbb{Z})$. For more details on this, we refer to \cite[Sec. 33]{Asc00}.

The next two lemmas are essentially well known.
We write down the proof of Lemma \ref{lemExtQuasi} below,
which is a little more general, and will be useful later.
 
\begin{lemma} \label{lemNQuasi} 
Let $G$ be a quasisimple group, and let $N \lhd G$ be a proper normal subgroup.
Then $N \subseteq \Z(G)$, and $G/N$ is again quasisimple.
Moreover, a perfect central extension of a quasisimple group is quasisimple.
\end{lemma}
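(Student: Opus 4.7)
The plan is to handle the three assertions separately, using in each case the simplicity of $G/\Z(G)$ and the fact that everything in sight is perfect.

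For the first claim, let $N \lhd G$ be proper, and consider the image $\overline{N} = N\Z(G)/\Z(G)$ inside the nonabelian simple group $G/\Z(G)$. By simplicity, $\overline{N}$ is either trivial or the whole group. In the first case, $N \subseteq \Z(G)$, as desired. In the second case, $G = N\Z(G)$, and then, using that $\Z(G)$ commutes with everything,
\[ G = [G,G] = [N\Z(G), N\Z(G)] = [N,N] \subseteq N, \]
contradicting the fact that $N$ is proper.

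For the second claim, since $G/N$ is a quotient of the perfect group $G$, it is perfect. Moreover, the image of any element of $\Z(G/N)$ in $G/\Z(G) \cong (G/N)/(\Z(G)/N)$ lies in the center of $G/\Z(G)$, which is trivial because $G/\Z(G)$ is nonabelian simple. Therefore $\Z(G/N) = \Z(G)/N$, and $(G/N)/\Z(G/N) \cong G/\Z(G)$ is simple. Hence $G/N$ is quasisimple.

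For the third claim, let $\pi \colon H \to G$ be surjective with kernel $Z \subseteq \Z(H)$, assume $H$ is perfect, and let $W = \pi^{-1}(\Z(G))$. Then $W$ is normal in $H$, and $W/Z = \Z(G)$ commutes with $H/Z$, so $[H,W] \subseteq Z \subseteq \Z(H)$. In particular $[[H,W],H] = 1$ and $[[W,H],H] = 1$, so the three subgroups lemma yields $[[H,H],W] = 1$, and since $H$ is perfect this forces $[H,W] = 1$, i.e.\ $W \subseteq \Z(H)$. The reverse inclusion is clear (since $\pi(\Z(H)) \subseteq \Z(G)$), so $\Z(H) = W$ and $H/\Z(H) \cong G/\Z(G)$ is nonabelian simple; combined with the assumption that $H$ is perfect, this shows $H$ is quasisimple.

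The only mildly subtle step is the third one, where one must take care that the center of $H$ need not obviously coincide with $\pi^{-1}(\Z(G))$; the three subgroups lemma together with perfectness of $H$ is exactly the right tool to bridge this gap.
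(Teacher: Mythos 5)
Your proof is correct. All three steps check out: the first is the standard ``image of $N$ in the simple group $G/\Z(G)$ is trivial or everything'' dichotomy, with centrality of $\Z(G)$ killing the second branch via $[N\Z(G),N\Z(G)]=[N,N]\subseteq N$; the second correctly uses that a surjection carries centers into centers, so $\Z(G/N)$ dies in the centerless group $G/\Z(G)$ and hence equals $\Z(G)/N$; and the third correctly combines $[H,W]\subseteq Z\subseteq\Z(H)$ with the three subgroups lemma and perfectness of $H$ to get $W=\Z(H)$. The paper does not argue at all here: it simply cites \cite[(31.2) and (33.5)]{Asc00}, so your contribution is a self-contained elementary proof of those facts. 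It is worth noting how your argument relates to the paper's own toolkit: the identity $\Z(G/N)=\Z(G)/N$ for a perfect group with centerless top, which you prove twice in passing, is exactly what the paper extracts from Gr\"un's lemma in its Lemma \ref{lemExtQuasi}; your three-subgroups computation in the third step is in effect an elementary proof of the instance of Gr\"un's lemma that is needed, so your route avoids any external reference at the modest cost of invoking the Hall--Witt/three subgroups lemma. Either approach is fine; yours buys self-containedness, the paper's buys brevity.
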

\begin{proof} 
	See \cite[(31.2) and (33.5)]{Asc00}.
\end{proof} 

\begin{lemma} \label{lemExtQuasi}
Let $G$ be a finite perfect group.
Let $N \subseteq \Z(G)$, and $N \lhd N_0 \lhd G$ such that $N_0/N \subseteq \Z(G/N)$.
Then $N_0 \subseteq \Z(G)$.
\end{lemma}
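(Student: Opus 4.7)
The plan is to show, for each fixed $n_0 \in N_0$, that the map $g \mapsto [g,n_0]$ is a homomorphism $G \to N$, and then use that $G$ is perfect to conclude this map is trivial.

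First I would observe that the hypothesis $N_0/N \subseteq \Z(G/N)$ means exactly that $[G, N_0] \subseteq N$. So for every $g \in G$ and $n_0 \in N_0$, the commutator $[g, n_0]$ lies in $N$, and in particular in $\Z(G)$. This lets me define, for each $n_0 \in N_0$, a set-theoretic map
$$ \phi_{n_0} : G \> \to \> N, \qquad \phi_{n_0}(g) \> := \> [g, n_0] . $$

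Next I would verify that $\phi_{n_0}$ is a group homomorphism. This uses the standard commutator identity
$$ [g_1 g_2, n_0] \> = \> [g_1, n_0]^{g_2} \cdot [g_2, n_0] , $$
together with the fact that $[g_1, n_0] \in N \subseteq \Z(G)$, so conjugation by $g_2$ acts trivially on it. Hence $\phi_{n_0}(g_1 g_2) = \phi_{n_0}(g_1) \phi_{n_0}(g_2)$.

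Finally, since the target $N$ is abelian, $\phi_{n_0}$ factors through the abelianization $G/[G,G]$, which is trivial because $G$ is perfect. Thus $\phi_{n_0}$ is the constant map $1$, meaning $n_0$ commutes with every element of $G$. As $n_0 \in N_0$ was arbitrary, $N_0 \subseteq \Z(G)$. There is no real obstacle here: the one thing to get right is the invocation of centrality of $[g_1, n_0]$ at the moment of showing $\phi_{n_0}$ is a homomorphism, which is where the hypothesis $N_0/N \subseteq \Z(G/N)$ enters crucially.
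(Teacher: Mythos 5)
Your argument is correct, and it takes a different route from the paper. You observe that $N_0/N \subseteq \Z(G/N)$ means $[G,N_0]\subseteq N \subseteq \Z(G)$, and then show that for each $n_0 \in N_0$ the map $g \mapsto [g,n_0]$ is a homomorphism $G \to N$ (the conjugate in the identity $[g_1g_2,n_0]=[g_1,n_0]^{g_2}[g_2,n_0]$ disappears precisely because $[g_1,n_0]$ is central), after which perfection of $G$ and commutativity of $N$ force the map to be trivial; this is sound, and in effect it gives a self-contained proof that the second center of a perfect group equals its center. The paper instead argues at the level of quotients: it shows $\Z(G/N)=\Z(G)/N$, with the inclusion $\Z(G/N)\subseteq \Z(G)/N$ obtained by quoting Gr\"un's Lemma, i.e.\ that $\Z(G/\Z(G))=1$ for perfect $G$, and then reads off $N_0 \subseteq \Z(G)$ directly from the hypothesis. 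The two proofs have essentially the same mathematical content, since your commutator computation is the standard proof of the special case of Gr\"un's Lemma being used; the paper's version is shorter by outsourcing that step to the literature, while yours is more elementary and avoids the external citation.
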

\begin{proof}
Let $Z=\Z(G)$.
We first show that $\Z(G/N) = Z/N$. It is clear that $Z/N \subseteq \Z(G/N)$.
On the other hand, $\Z(G/N)$ is certainly contained in the projection of $\Z(G/Z)$ in $G/N$.
By Gr\"{u}n's Lemma \cite{Gru36} we have $\Z(G/Z)=1$, so $\Z(G/N) \subseteq Z/N$.
Now from the hypotheses we have $N_0/N \subseteq Z/N$, i.e. $N_0 \subseteq Z$.
\end{proof}

We conclude this subsection with the following observation,
which is useful in an inductive argument. 

\begin{lemma} \label{lemKey}
	Let $G$ be a nonquasisimple group in which all the proper quotients are quasisimple.
	Then either $G$ has a unique minimal normal subgroup, or $G$ is the direct product of two nonabelian finite simple groups.
\end{lemma}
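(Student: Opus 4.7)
The plan is to assume $G$ has (at least) two distinct minimal normal subgroups $M_1 \neq M_2$ and show that we must be in the direct-product situation. By minimality $M_1 \cap M_2 = 1$. Consider the image of $M_2$ in the quasisimple quotient $G/M_1$: it is a normal subgroup, so by Lemma \ref{lemNQuasi} either $M_1 M_2 = G$ or the image sits inside $\Z(G/M_1)$. The condition $M_1 M_2 = G$ is symmetric in $M_1, M_2$, so the dichotomy splits cleanly into case (A) $G = M_1 \times M_2$, or case (B) in which $[G, M_2] \subseteq M_1 \cap M_2 = 1$ and, by the symmetric argument applied to $G/M_2$, also $[G, M_1] = 1$, giving $M_1, M_2 \subseteq \Z(G)$.

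In case (A), I would argue that each $M_i$ is nonabelian simple. Being minimal normal in $G$, $M_i$ is characteristically simple, hence $M_i \cong T_i^{k_i}$ for some simple group $T_i$. Since $G/M_1 \cong M_2$ is quasisimple, $M_2/\Z(M_2)$ is nonabelian simple; this forces $T_2$ nonabelian (else $M_2$ would be abelian, hence equal to its own center), then $\Z(M_2) = 1$, and then $k_2 = 1$. The same reasoning applied to $G/M_2$ shows $M_1$ is nonabelian simple, yielding the desired direct product decomposition.

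In case (B), the plan is to reach a contradiction via Lemma \ref{lemNQuasi}. The crucial step is to show that $G$ is perfect. If $G$ were abelian, then every proper quotient would be abelian and quasisimple, hence trivial, forcing $G$ to be simple and contradicting the existence of two distinct minimal normal subgroups; so $G' \neq 1$. If moreover $G' \neq G$, then $G/G'$ would be a nontrivial abelian proper quotient, contradicting again the fact that quasisimple abelian groups are trivial. Hence $G$ is perfect, and thus a perfect central extension of the quasisimple group $G/M_1$; the last clause of Lemma \ref{lemNQuasi} then forces $G$ itself to be quasisimple, contradicting the hypothesis.

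The main obstacle I anticipate is precisely this perfectness step in case (B): it uses the hypothesis on proper quotients in a somewhat delicate way, and it is what lets one invoke the closure property of quasisimple groups under perfect central extensions. Everything else reduces to a direct application of Lemma \ref{lemNQuasi} together with the standard structure theory of characteristically simple groups.
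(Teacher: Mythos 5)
Your proposal is correct and follows essentially the same route as the paper: assume two distinct minimal normal subgroups, split on whether their product is all of $G$, and in the non-product case use the centrality of proper normal subgroups of quasisimple groups together with Lemma \ref{lemNQuasi} (perfect central extensions of quasisimple groups are quasisimple) to reach a contradiction. The only difference is organizational: the paper disposes of the non-perfect case at the outset (prime order versus perfect), whereas you establish perfectness inside case (B), and you spell out the ``two nonabelian simple factors'' step that the paper leaves as easy.
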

\begin{proof}
	We first notice that either $G$ is a group of prime order, or it is perfect.
	In the first case the statement is trivial, thus we can suppose that $G$ is perfect.
	
	Let us suppose that $N_1$ and $N_2$ are two distinct minimal normal subgroups of $G$, so that $N_1 \cap N_2 =1$.
	Let $H=N_1N_2$, and note that $G/N_i$ is quasisimple for $i\in \{1,2\}$.
	If $G=H$, then $G$ is the direct product of finite simple factors.
	In particular, it is easy to see that these factors must be precisely two and nonabelian.
	
	Aiming for a contradiction, suppose $H \neq G$.
	Since all the proper normal subgroups of a quasisimple group are central,
	$H/N_i \subseteq \Z(G/N_i)$ for both $i\in \{1,2\}$.
	Let $h \in H$ and $g \in G$. For each $i\in \{1,2\}$, we have
	$$ [h,g]N_i = [hN_i,gN_i] \subseteq N_i . $$
	It follows that $[h,g] \in N_1 \cap N_2 =1$, and since $h \in H$ is arbitrary, we conclude $H \subseteq\Z(G)$.
	Finally, by Lemma \ref{lemNQuasi}, $G$ itself is quasisimple, against our initial assumption.
\end{proof}

\subsection{Linear and projective representations} 

So far, we have only dealt with complex linear representations, but modular and projective representations
will be a key tool in the proof of Theorem \ref{thm:B}.
For a prime $r \geq 2$,
we use $\D_r(G)$ to denote the minimal degree of a nontrivial irreducible modular representation of $G$ over a field of characteristic $r$.
We recall that a {\itshape projective representation} of $G$ is a representation of a perfect central extension of $G$.
Equivalently, it is a group homomorphism
$$ \rho \colon \> G \to \mathrm{PGL}(V) . $$
The degree of the representation is the dimension of $V$.
As for linear representations, we denote by $\Di_0(G)$ the minimal degree of a nontrivial complex projective representation,
and by $\Di_r(G)$ the minimal degree of a nontrivial projective representation over a field of characteristic $r$.
We remark that, if $G$ is finite and perfect, and $\widetilde{G}$ is its universal perfect central extension, then
$$ \Di_0(G) = \D_0(\widetilde{G}) \leq \D_0(G), \quad\hbox{and} \quad \Di_r(G) =\D_r(\widetilde{G}) \leq \D_r(G) . $$
In the remainder of this subsection, we focus on the projective representations of finite simple groups of Lie type.
We recall that, if $G$ is a group of Lie type over a field of characteristic $p$,
then a modular representation of $G$ over a vector space $V$ is in {\itshape natural characteristic} when the characteristic of $V$ is $p$,
while it is in {\itshape cross-characteristic} otherwise.

\begin{lemma} \label{lemMinimalV}
	Let $L$ be a finite simple group of Lie type over $\mathbb{F}_q$, $q=p^f$.
	If $V$ is a vector space in natural characteristic $p$ on which $\widetilde{L}$ acts nontrivially, then
	$$ |V| \> \geq \> q^{\, \Di_p(L)} . $$
\end{lemma}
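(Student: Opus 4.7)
The plan is as follows. Write $q=p^f$. I may assume the scalar field of $V$ is finite — otherwise $|V|=\infty$ and the bound is vacuous — so $V$ is a finite $\mathbb{F}_p$-vector space. By passing to an irreducible $\mathbb{F}_p[\widetilde{L}]$-composition factor $S$ on which $\widetilde{L}$ still acts nontrivially (one exists because $\widetilde{L}$ acts nontrivially on $V$ and $|V|\geq |S|$), the task reduces to showing $\dim_{\mathbb{F}_p} S \geq f \cdot \Di_p(L)$.

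Next I would apply Galois descent. By Schur's lemma together with Wedderburn's theorem, $F:=\mathrm{End}_{\widetilde{L}}(S)$ is a finite field; set $e:=[F:\mathbb{F}_p]$. Then $S$ is absolutely irreducible over $F$ of dimension $d:=\dim_{\mathbb{F}_p}S/e$, and
$$ S \otimes_{\mathbb{F}_p} \overline{\mathbb{F}_p} \,=\, \bigoplus_{\sigma \in \mathrm{Gal}(F/\mathbb{F}_p)} S^\sigma $$
is a single Galois orbit of $e$ absolutely irreducible $\widetilde{L}$-modules, each of $\overline{\mathbb{F}_p}$-dimension $d$. Identifying one such summand with the restriction of the Steinberg module $V_\lambda$ of the associated simply connected algebraic group (for some $q$-restricted dominant weight $\lambda$), the Galois action becomes cyclic shift of the $p$-adic digits of $\lambda$: writing $\lambda=\sum_{i=0}^{f-1} p^i \lambda_i$ with each $\lambda_i$ $p$-restricted, the minimal period of $(\lambda_0,\ldots,\lambda_{f-1})$ equals $e$, and in particular $e\mid f$.

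Steinberg's tensor product theorem now delivers
$$ d \,=\, \prod_{i=0}^{f-1} \dim V_{\lambda_i} \,=\, \Bigl(\prod_{i=0}^{e-1} \dim V_{\lambda_i}\Bigr)^{f/e} \,\geq\, \Di_p(L)^{f/e}, $$
because $\lambda \neq 0$ forces at least one $\lambda_i\neq 0$, whose factor is at least $\Di_p(L)$ by the very definition of $\Di_p(L)$. A short induction on $t=f/e$ (using $\Di_p(L)\geq 2$, which holds for every finite simple group of Lie type) yields $\Di_p(L)^{t-1}\geq t$, equivalently $e \cdot \Di_p(L)^{f/e} \geq f \cdot \Di_p(L)$. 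Combining, $\dim_{\mathbb{F}_p}S = e \cdot d \geq f \cdot \Di_p(L)$, and hence $|V| \geq |S| = p^{\dim_{\mathbb{F}_p}S} \geq q^{\Di_p(L)}$.

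The main technical delicacy, in my view, lies not in the calculations above but in cleanly identifying $\mathbb{F}_p[\widetilde{L}]$-modules with restricted Steinberg modules of the simply connected algebraic group over $\mathbb{F}_q$. These two languages coincide generically, but for the handful of simple $L$ whose Schur multiplier properly exceeds the center of the simply connected cover, one has to argue separately — either by observing that the extra central kernel cannot substantially reduce $|V|$ in natural characteristic, or by invoking Steinberg on the algebraic-group side and checking the bound directly in the finitely many exceptional cases.
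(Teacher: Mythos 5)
Your proof is correct, and it supplies considerably more detail than the paper does: the paper's proof of this lemma is literally the assertion that it ``follows immediately from the definition'' (an earlier, commented-out version in the source instead cited Propositions 5.3.1, 5.4.6 and Remark 5.4.7 of Kleidman--Liebeck on fields of definition of absolutely irreducible modules). The point your argument makes explicit, and which the one-line justification glosses over, is exactly the field-of-definition issue: from the definition of $\Di_p(L)$ alone one only gets $\dim_{\mathbb{F}_p}V\geq \Di_p(L)$, while the needed bound $\dim_{\mathbb{F}_p}V\geq f\,\Di_p(L)$ requires knowing how an irreducible $\mathbb{F}_p\widetilde{L}$-module splits into Galois-conjugate absolutely irreducible pieces and how small its endomorphism field can be. Your route --- passing to an irreducible constituent, Galois descent, Steinberg's tensor product theorem, identifying the endomorphism-field degree $e$ with the period of the digit string, and the elementary inequality $\Di_p(L)^{f/e-1}\geq f/e$ --- is essentially a self-contained reconstruction of the Kleidman--Liebeck input; it even treats modules defined over proper subfields of $\mathbb{F}_q$ directly, whereas the authors' commented sketch only argues that the module of minimal cardinality is defined over $\mathbb{F}_q$ itself. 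Two small points to tidy: what you call the ``Steinberg module $V_\lambda$'' should be the irreducible highest-weight module of highest weight $\lambda$ (the Steinberg module proper is the single module of highest weight $(q-1)\rho$), and for the twisted groups ($\mathrm{PSU}_d(q)$, $^2B_2$, $^2G_2$, $^3D_4$, $^2E_6$, $^2F_4$) the parametrization by $q$-restricted weights and the Frobenius-twist action on digits must be taken in their twisted form, though the digit count and the conclusion are unchanged. Finally, your closing worry about exceptional Schur multipliers dissolves cleanly: the exceptional part of the multiplier of a simple group of Lie type in its defining characteristic $p$ is a $p$-group, and a central $p$-element acts as a unipotent scalar, hence trivially, on every irreducible module in characteristic $p$; so all representations in question factor through the simply connected finite group and Steinberg's theorem applies verbatim, with no case-by-case check needed.
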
 
\begin{proof} 
It follows immediately from the definition.
\end{proof}

For the proof of Theorem \ref{thm:B}, we need detailed information about the $\frac{1}{4}$-quasirandom finite simple groups.
The infinite families of $\frac{1}{4}$-quasirandom simple groups are
	\begin{gather*}
	\PSL_d(q) \hbox{ with }d\in\{2,3\}, \quad
	\mathrm{Sp}_4(q) \hbox{ with } q \hbox{ even}, \\
	G_2(q) \hbox{ with } q=3 \> (\mathrm{mod}\,6), \quad
	^2B_2(q^2), \quad
	^2G_2(q^2) \,. \end{gather*} 
	The relevant information can be recovered from \cite{LS74,SZ93} for classical groups,
	and from \cite[Sec. 4]{Lub01} for exceptional groups (see also the Appendix).
	We collect these data in Table \ref{table:1/4}.
	
{\small 
\begin{table}[ht] 
	\centering
	\begin{tabular}{| c c | c c c c |} 
		\hline
		
		\phantom{$\dfrac{1^1}{1_{1_1}}$}\hspace{-6mm}
		$L$ & Comments  & $\D_0(L)$ & $\Di_0(L)$ & $\Di_r(L)$, $r\ne p$ & $\Di_p(L)$ \\
		
		\hline \hline
		
		\phantom{$\dfrac{1^1}{1_{1_1}}$}\hspace{-6mm}
		$\PSL_2(q)$ & $q$ even & $q-1$ & $q-1$ & $q-1$ & $2$ \\
		
		\phantom{$\dfrac{1^1}{1_{1_1}}$}\hspace{-6mm}
		& $q = 1\, (\mathrm{mod}\,4) $ & $\dfrac{q+1}{2}$ & $\dfrac{q+1}{2}$ & $\dfrac{q-1}{2}$ & $2$ \\
		
		\phantom{$\dfrac{1^1}{1_{1_1}}$}\hspace{-6mm}
		& $q = 3\, (\mathrm{mod}\,4) $ & $\dfrac{q-1}{2}$ & $\dfrac{q-1}{2}$ & $\dfrac{q-1}{2}$ & $2$ \\
		
		\phantom{$\dfrac{1^1}{1_{1_1}}$}\hspace{-6mm}
		$\PSL_3(q)$ & $q \ne 2,4$ & $q(q+1)$ & $q(q+1)$ & $q^2+q-1$ & $3$ \\
		
		\phantom{$\dfrac{1^1}{1_{1_1}}$}\hspace{-6mm}
		$\mathrm{Sp}_4(q)$ & $q$ even & $\dfrac{1}{2}q(q-1)^2$ & $\dfrac{1}{2}q(q-1)^2$ & $\dfrac{1}{2}q(q-1)^2$ & $4$ \\
		
		\phantom{$\dfrac{1^1}{1_{1_1}}$}\hspace{-6mm}
		$G_2(q)$ & $q=3 (\mathrm{mod}\,6)$ & $\phi_3(q)\phi_6(q)$ & $\phi_3(q)\phi_6(q)$ & $\phi_3(q)\phi_6(q) - \delta_1$ & $7$ \\
		
		\phantom{$\dfrac{1^1}{1_{1_1}}$}\hspace{-6mm}
		$^2B_2(q^2)$ & $q=\sqrt{2^{2m+1}}$ & $\dfrac{\sqrt{2}}{2} q \phi_1(q) \phi_2(q)$ & $\dfrac{\sqrt{2}}{2} q \phi_1(q) \phi_2(q)$ & $\dfrac{\sqrt{2}}{2} q \phi_1(q) \phi_2(q)$ & $4$ \\
		
		\phantom{$\dfrac{1^1}{1_{1_1}}$}\hspace{-6mm}
		$^2G_2(q^2)$ & $q=\sqrt{3^{2m+1}}$ & $\phi_{12}(q)$ & $\phi_{12}(q)$ & $\phi_{12}(q)-\delta_2$ & $7$ \\
		
		\hline
		
	\end{tabular}
	\medskip
	{\caption{The $\frac{1}{4}$-quasirandom simple groups of Lie type.} \label{table:1/4}}
\end{table}
}

    We denote by $\phi_\ell(q)$ the $\ell$-th cyclotomic polynomial in the variable $q$.
	The value of $\delta_1$ is $1$ if $r=2$, $q=3^f$, $f\geq 2$, or $r=3$, $3$ divides $q-1$, and it is $0$ otherwise.
	The value of $\delta_2$ is $1$ if $r=2$, and $0$ otherwise.
	
	\begin{remark} \label{rem14simple} 
	We observe that, if $L$ is a $\frac{1}{4}$-quasirandom simple group of Lie type, in cross-characteristic we have
	$$ \Di_r(L) \> \geq \> \D_0(L) -1 . $$
	Moreover, we point out that the comparison between $\Di_0(L)$ and $\Di_r(L)$ ($r \neq p$) can be explicitly found,
	for all simple groups of Lie type, in \cite[Sec. 3]{Tie03}.
\end{remark}

\section{Quasirandom groups of affine type} \label{sec3} 

In this short section, we provide some highly quasirandom nonquasisimple groups.
The key tool is Lemma \ref{lemAffSp} below.
Before giving the results, we revise some notions from the representation theory of finite groups.
Let $V$ be a vector space over $\mathbb{C}$, and let $V^*$ be its dual space.
We remark that $V^* \cong \mathrm{Hom}(V,\mathbb{C}^*)$.
If $\phi \colon V \rightarrow V$ is a homomorphism of vector spaces, then $\phi^* \colon V^* \rightarrow V^*$ is defined by
$$ \phi^* (f) \> := \> f \circ \phi . $$
Let $K$ be a finite group.
If $\rho \colon K \rightarrow \GL(V)$ is a representation of $K$,
then the dual representation $\rho^* \colon K \rightarrow \GL(V^*)$ is defined by
$$ \rho^*(g) \> := \> \rho(g^{-1})^* $$
for every $g \in K$.
It is well known that the representations $\rho$ and $\rho^*$ are not isomorphic in general.

\begin{lemma} \label{lemAffKey} 
	Let $V$ be a finite vector space, and $K \leqslant \GL(V)$.
	Then the number of orbits of $K$ on $V$ equals the number of orbits of $K$ on $V^*$.
\end{lemma}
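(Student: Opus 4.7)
The plan is to invoke Burnside's lemma (the Cauchy--Frobenius formula): the number of orbits of $K$ on a finite set $X$ equals $\frac{1}{|K|}\sum_{g \in K} |\mathrm{Fix}_X(g)|$. Hence it suffices to check, for each $g \in K$ individually, that $|\mathrm{Fix}_V(g)| = |\mathrm{Fix}_{V^*}(g)|$.

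First I would rewrite the two fixed-point sets as kernels (or annihilators) of linear maps. Clearly $\mathrm{Fix}_V(g) = \ker(g - \mathrm{id}_V)$. For the dual action, $f \in V^*$ is fixed by $g$ precisely when $\rho^*(g)f = f$, that is $f \circ g^{-1} = f$, which is equivalent to $f$ vanishing on $\mathrm{Im}(g^{-1} - \mathrm{id}_V)$. Writing $g^{-1} - \mathrm{id}_V = -g^{-1}(g - \mathrm{id}_V)$ and using that $g^{-1}$ is invertible, this image coincides with $\mathrm{Im}(g - \mathrm{id}_V)$. Therefore $\mathrm{Fix}_{V^*}(g)$ is the annihilator of $\mathrm{Im}(g - \mathrm{id}_V)$ inside $V^*$.

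Now I apply the standard identity $\dim \mathrm{Ann}(W) = \dim V - \dim W$ for a subspace $W \subseteq V$, together with the rank--nullity theorem for the linear map $g - \mathrm{id}_V$:
\[ \dim \mathrm{Fix}_{V^*}(g) \;=\; \dim V - \dim \mathrm{Im}(g - \mathrm{id}_V) \;=\; \dim \ker(g - \mathrm{id}_V) \;=\; \dim \mathrm{Fix}_V(g). \]
Since $V$ and $V^*$ are finite vector spaces of the same dimension over the same finite field, equality of dimensions upgrades to equality of cardinalities. Summing over $g \in K$ and dividing by $|K|$ finishes the proof via Burnside.

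There is no serious obstacle here: the statement is essentially a packaging of finite-dimensional linear duality together with the Cauchy--Frobenius formula. The one place to be a little careful is the translation from ``$f$ is fixed by $g$'' to ``$f$ annihilates the image of $g - \mathrm{id}_V$'', since one must use the definition $\rho^*(g) = \rho(g^{-1})^*$ and keep track of the inverse; the rest is bookkeeping.
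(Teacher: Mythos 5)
Your proposal is correct and follows essentially the same route as the paper: Burnside's (Cauchy--Frobenius) lemma reduces the statement to matching, element by element, the number of fixed vectors of $g$ on $V$ with the number of fixed functionals on $V^*$. The paper compares the two counts via the transpose and eigenvalue multiplicities of $(g^{-1})^T$, while you use the annihilator of $\mathrm{Im}(g-\mathrm{id}_V)$ and rank--nullity; this is only a difference in linear-algebra bookkeeping (and, if anything, your version is the tidier one).
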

\begin{proof}
Let $V=(\mathbb{F}_q)^n$ and $g \in K$.
The number of $g$-fixed points in $V$ is $q^{\dim(Ker(g-I))}$,
while the number of $g$-fixed points in $V^*$ is $q^{\dim(Ker((g^{-1})^T-I))}$,
where $g^T$ is the transpose of $g$.
Now $\dim(Ker(g-I))$ is the multiplicity of $1$ as an eigenvalue of $g$.
We remark that the eigenvalues of $(g^{-1})^T$ are the eigenvalues of $g^{-1}$,
which are the inverses of the eigenvalues of $g$.
In particular, the two quantities above are equal.
Since $g$ is arbitrary, the proof follows from Burnside's lemma,
i.e. the number of orbits is equal to the average number of fixed points.
\end{proof} 

\begin{lemma} \label{lemAffSp} 
	Let $G = V \rtimes K$ be a $2$-transitive affine group.
	Then
	$$ \D_0(G) \> = \> \D_0(K) . $$
\end{lemma}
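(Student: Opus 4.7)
The plan is to analyze $\Irr(G)$ via Clifford theory with respect to the normal abelian subgroup $V$. Every $\pi \in \Irr(G)$ either contains the trivial character of $V$ in its restriction (in which case $\pi$ factors through $G/V \cong K$ and contributes $\D_0(K)$ to the minimum), or its restriction to $V$ decomposes into a $K$-orbit of nontrivial characters $\chi \in V^* \setminus \{0\}$. In the second case, by Clifford's theorem $\pi = \mathrm{Ind}_{V \rtimes K_\chi}^G(\widetilde{\sigma})$ for some extension $\widetilde{\sigma}$ of $\chi$ to the inertia group, so
\[ \dim(\pi) \> = \> [K:K_\chi] \cdot \dim(\widetilde{\sigma}) \> \geq \> [K:K_\chi] . \]

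The 2-transitivity hypothesis means that $K$ acts transitively on $V \setminus \{0\}$. I would now invoke Lemma \ref{lemAffKey}: since $K$ has two orbits on $V$ (namely $\{0\}$ and $V\setminus\{0\}$), it also has exactly two orbits on $V^*$, so $K$ acts transitively on $V^* \setminus \{0\}$. Consequently, for any nontrivial $\chi \in V^*$ the orbit-stabilizer formula gives $[K:K_\chi] = |V|-1$, and all Clifford-induced representations of the second type have dimension a multiple of $|V|-1$. Combined with the first case, this yields
\[ \D_0(G) \> = \> \min\{\D_0(K),\, |V|-1\} . \]

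To finish, I would compare the two terms in the minimum. Because $K$ acts transitively (hence nontrivially, assuming $|V| \geq 3$, which is automatic for any 2-transitive affine group of interest) on the set $V \setminus \{0\}$ of size $|V|-1$, Lemma \ref{lemActions} gives $\D_0(K) \leq (|V|-1)-1 < |V|-1$. Thus the minimum is realized by $\D_0(K)$, and $\D_0(G) = \D_0(K)$.

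The only real subtlety is justifying that the inequality $\dim(\pi) \geq [K:K_\chi]$ for the Clifford-induced representations is truly $\geq |V|-1$, which requires the transitivity of $K$ on $V^* \setminus \{0\}$; this is the reason Lemma \ref{lemAffKey} is stated just before. Once that is in hand the argument is mechanical, with the edge cases ($|V|=2$, where $K$ is trivial and the statement is vacuous) handled trivially.
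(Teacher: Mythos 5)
Your proof is correct and follows essentially the same route as the paper: Clifford theory with respect to $V$, Lemma \ref{lemAffKey} to transfer transitivity on $V\setminus\{0\}$ to transitivity on $\Irr(V)\setminus\{1\}$, and Lemma \ref{lemActions} to see $\D_0(K)\leq |V|-2 < |V|-1$, so the minimum is attained by the representations factoring through $K$. (The only quibble, immaterial to the bound you use, is that the Clifford correspondent over $\chi$ need not be an extension of $\chi$ itself; one only needs that $\dim(\pi)$ is a multiple of $[K:K_\chi]$.)
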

\begin{proof}
	Since $K$ is a quotient of $G$,
	it is sufficient to show that $\D_0(G) \geq \D_0(K)$.
	We first notice that $K$ acts nontrivially on $V \setminus \{0\}$,
	so $\D_0(K) \leq |V|-2$ from Lemma \ref{lemActions}.
	Moreover, we can identify $V^*$ with $\Irr(V)$.
	Let $\pi \in \Irr(G) \setminus \{1\}$.
	If $\pi_{\downarrow V}$ is the trivial representation of $V$, then $\pi \in \Irr(G/V)$ is an irreducible representation of $K$.
	In particular, $\dim(\pi) \geq \D_0(K)$.
	On the other hand, if $\pi_{\downarrow V}$ is a nontrivial representation of $V$,
	then, by Clifford theory, there is an irreducible character $\theta \in \Irr(V) \setminus \{1\}$ such that
	$\dim(\pi)$ is a multiple of $|K:I_K(\theta)|$, where $I_K(\theta)$ is the inertia subgroup of $\theta$. 
	From Lemma \ref{lemAffKey}, $K$ acts transitively on $\Irr(V) \setminus \{1\}$.
	By the orbit-stabilizer theorem, we have
	$$ |K:I_K(\theta)| = |\theta^K| = |V|-1 . $$
	From the discussion above, we obtain
	$$ \dim(\pi) \geq |V|-1 > \D_0(K) . $$
	Since $\pi \in \Irr(G) \setminus \{1\}$ is arbitrary, we conclude that $\D_0(G) \geq \D_0(K)$ as desired.
\end{proof}

\begin{example} \label{ex:ASp} 
	Let $q \geq 4$ range among the powers of $2$, and let $G_q = (\mathbb{F}_q)^4 \rtimes \Sp_4(q)$. 
	Since $\Sp_4(q)$ is transitive on $(\mathbb{F}_q)^4 \setminus \{0\}$, from Lemma \ref{lemAffSp} we have
	\[\begin{split}
		\a(G_q) &= \frac{\log \D_0 ( \Sp_4(q) )}{\log |G_q|}
		\\&= \frac{\log \left( \dfrac{q(q-1)^2}{2} \right) }{\log (q^8(q^4-1)(q^2-1))} .
	\end{split} \, \]
	A direct computation shows that this value is more than $\frac{1}{5}$ for $q \geq 64$,
	and it lies between $\frac{3}{14} - \frac{1}{\log_2 q}$ and $\frac{3}{14}$,
	for all $q$. In particular, $\lim_{q \rightarrow +\infty} \a(G_q) = \left( \frac{3}{14} \right)^-$.
	Similarly, if $q$ is a prime power and $H_q = (\mathbb{F}_q)^2 \rtimes \SL_2(q)$,
	then from Lemma \ref{lemAffSp} we have
	$$ \a(H_q) = \frac{\log \D_0(\SL_2(q))}{\log|H_q|} . $$
	A direct computation shows that $\a(H_q)$ lies between $5^{-1} -q^{-1}$ and $5^{-1}$ for all $q$,
	so in particular $\lim_{q \rightarrow +\infty} \a(H_q) = \left( \frac{1}{5} \right)^-$.
\end{example}

\section{Proof of the main theorems} \label{sec4} 

\subsection{Proof of Theorem \ref{thm:A}} 

We immediately prove the second part of the theorem.
We restate it as follows.

\begin{lemma} \label{lemNew}
	Fix $\varepsilon,\delta >0$, and let $G$ be a perfect group.
	Suppose that $G/\Sol(G)$ is $\varepsilon$-quasirandom, and that $|G/\Sol(G)| \geq |G|^\delta$.
	If $|G|$ is sufficiently large with respect to $\varepsilon$ and $\delta$,
	then $G$ is $\frac{\varepsilon \delta}{3}$-quasirandom.
\end{lemma}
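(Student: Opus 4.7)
The plan is to prove the lemma by bounding $\dim \pi$ from below for every nontrivial irreducible representation $\pi$ of $G$, using Clifford theory with respect to $N := \mathrm{Sol}(G)$. First, if $\pi$ is trivial on $N$, it factors through $G/N$ and
\[ \dim \pi \geq \D_0(G/N) \geq |G/N|^\varepsilon \geq |G|^{\varepsilon\delta}, \]
which already beats the target $|G|^{\varepsilon\delta/3}$. Otherwise, pick a nontrivial irreducible constituent $\theta \in \Irr(N)$ of $\pi_{\downarrow_N}$, so that $\dim \pi \geq [G : I_G(\theta)] \cdot \dim \theta$. Since inner automorphisms of $N$ fix $\theta$ as a class function, $I_G(\theta) \supseteq N$, so $I_G(\theta)/N$ is a subgroup of $G/N$.

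If $I_G(\theta) \neq G$, then $I_G(\theta)/N$ is a proper subgroup of the $\varepsilon$-quasirandom group $G/N$, and combining Lemma \ref{lemExt1} with Corollary \ref{corActions}(i) applied to $G/N$ yields
\[ [G : I_G(\theta)] \ = \ [G/N : I_G(\theta)/N] \ \geq \ \D_0(G/N) + 1 \ \geq \ |G/N|^\varepsilon \ \geq \ |G|^{\varepsilon\delta}, \]
which gives $\dim \pi \geq |G|^{\varepsilon\delta}$. The delicate case is $I_G(\theta) = G$, where $\theta$ is $G$-invariant and $\pi_{\downarrow_N} = e \cdot \theta$. By Clifford--Gallagher theory, the irreducibles of $G$ lying over $\theta$ are described by irreducibles of a twisted group algebra of $G/N$, with dimensions $\dim \theta \cdot d$ for $d$ running over those of the twisted algebra. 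When $\dim \theta = 1$, perfectness of $G$ forces the underlying $2$-cocycle to be nontrivial (else $\theta$ would extend to a linear character of the perfect group $G$), so $\pi$ corresponds to a genuine projective representation of $G/N$ and $\dim \pi \geq \Di_0(G/N)$; this latter quantity is comparable to $\D_0(G/N)$ up to an absolute constant once one controls the Schur multiplier of $G/N$, and hence $\dim \pi \geq c\,|G|^{\varepsilon\delta}$ for some $c > 0$. For $\dim \theta > 1$ and $d \geq 2$, one also obtains $d \geq \Di_0(G/N)$ or $d \geq \D_0(G/N)$ depending on whether the cocycle is trivial, so the same bound follows.

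Collecting the cases, $\dim \pi \geq c \, |G|^{\varepsilon\delta}$ for some $c = c(\varepsilon, \delta) > 0$, which for $|G|$ sufficiently large exceeds $|G|^{\varepsilon\delta/3}$, establishing that $G$ is $\varepsilon\delta/3$-quasirandom. The main obstacle is the residual subcase where $\dim \theta > 1$, the cocycle is trivial, and $d = 1$, so that $\pi = \tilde\theta$ is an extension of $\theta$ to $G$ of dimension $\dim \theta$, which could a priori be small. To close this case I would note that $G/\ker\tilde\theta$ is then a perfect finite subgroup of $\GL_{\dim\theta}(\mathbb{C})$ containing an isomorphic copy of $N$ in its solvable radical, so by the Jordan--Schur theorem both its ``essentially nonabelian'' part and its abelian part are bounded in terms of $\dim\theta$; combined with $|G/N| \geq |G|^\delta$ and the perfect structure of $G/N$ (which has trivial solvable radical and so no abelian normal quotients to absorb a large abelian piece), this forces $\dim \theta$ to grow with $|G|$ and, in particular, to exceed $|G|^{\varepsilon\delta/3}$ once $|G|$ is sufficiently large. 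The ``sufficiently large $|G|$'' hypothesis is used here to absorb all the bounded constants coming from Schur multipliers and Jordan--Schur bounds into the loose factor of $1/3$ in the exponent.
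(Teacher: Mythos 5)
Your Clifford-theoretic reduction is fine as far as it goes: the cases where $\pi$ is trivial on $N=\mathrm{Sol}(G)$, or where the inertia group $I_G(\theta)$ is proper (via Corollary \ref{corActions}(i) and Lemma \ref{lemExt1}), do give $\dim\pi\geq|G|^{\varepsilon\delta}$. But the two places you flag as "delicate" are exactly where the real content lies, and neither is closed. First, in the residual case where a $G$-invariant $\theta$ with $\dim\theta>1$ extends to $\pi=\tilde\theta$, your Jordan--Schur argument cannot give a power bound. Writing $P=G/\ker\pi\leq\GL_m(\CC)$ with $m=\dim\theta$, Jordan gives an abelian normal subgroup of index at most $J(m)$; pushing this through quasirandomness of $G/N$ (the image of $P$ modulo its solvable radical is a nontrivial perfect quotient of $G/N$ of order at most $J(m)$) yields only $J(m)\geq|G|^{2\varepsilon\delta}$, and since $J(m)$ is factorial-type in $m$ this gives $m\gtrsim\log|G|/\log\log|G|$ -- far short of $|G|^{\varepsilon\delta/3}$. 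The sentence "this forces $\dim\theta$ to grow with $|G|$ and, in particular, to exceed $|G|^{\varepsilon\delta/3}$" is a non sequitur: growth with $|G|$ is not a power of $|G|$. Second, in the nontrivial-cocycle cases you invoke the claim that $\Di_0(G/N)$ is comparable to $\D_0(G/N)$ "up to an absolute constant once one controls the Schur multiplier"; for a general perfect group with trivial solvable radical this is not a formal statement about Schur multipliers but a CFSG-level fact about minimal degrees of covers of all the simple factors and of the bounded outer part, and you give no argument for it.

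For comparison, the paper does not run this analysis at all: it quotes \cite[Lemma 4.10]{EMPS23} (rooted in \cite{NP11}), namely $\D_0(G)\geq c\,\D_0(G/\mathrm{Sol}(G))^{1/2}$ with an explicit $c$, takes logarithms, and absorbs the constant into the gap between $\tfrac{\varepsilon\delta}{2}$ and $\tfrac{\varepsilon\delta}{3}$ for $|G|$ large. Note the square root there: even the experts' bound loses a power, which is a warning sign against your "comparable up to an absolute constant" assertion. In effect your proposal attempts to reprove that cited lemma by elementary Clifford theory plus Jordan--Schur, and it breaks precisely at the steps where the CFSG-based structure theory of finite linear groups (quasisimple components of bounded projective degree plus Landazuri--Seitz type bounds) is genuinely needed. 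Either cite the lemma as the paper does, or supply that structure theory; as written the proof has a gap.
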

\begin{proof}
By \cite[Lemma 4.10]{EMPS23}, we have
	$$ \D_0(G) \> \geq \> c \, \D_0(G/\Sol(G))^{1/2} , $$
	where $c$ is an absolute constant.
It follows that
$$ |G|^{\a(G)} \> \geq \> c |G/\Sol(G)|^{\a(G/\Sol(G))/2} , $$
and passing to the logarithms,
	$$ \a(G) \> \geq \> \frac{\a(G/\Sol(G))}{2} \cdot \frac{\log|G/\Sol(G)|}{\log|G|} - \frac{\log (c^{-1})}{\log|G|}  . $$
The proof follows.
\end{proof} 

The main idea behind the other direction of Theorem \ref{thm:A} is the following.
Under the hypothesis that $\Sol(G)=1$,
we prove that $G=F^*(G)$, the generalized Fitting subgroup of $G$.
If this is not the case, then $G/F^*(G)$ has a nontrivial simple quotient $L$.
We use the quasirandomness of $L$ to obtain an upper bound on $|G|$,
while Bender's theorem and other considerations provide a lower bound on $|F^*(G)|$.
These inequalities fight each other, giving the desired contradiction.
A similar argument will be used in a portion of the proof of Theorem \ref{thm:B}.
Going into details, we start recording the Schreier conjecture,
which is now a theorem as a consequence of the Classification of the Finite Simple Groups.

\begin{theorem}[Schreier conjecture] \label{thSchreier} 
	If $L$ is a finite simple group, then the group of the outer automorphisms $\Out(L)$ is solvable.
\end{theorem}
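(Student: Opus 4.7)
The plan is to invoke the Classification of the Finite Simple Groups (CFSG) and then verify solvability of $\Out(L)$ for each family separately. Up to isomorphism, every nonabelian finite simple group is either an alternating group $\Alt(n)$ with $n \geq 5$, a simple group of Lie type, or one of the 26 sporadic groups (and abelian simple groups are the cyclic $\mathbb{Z}/p\mathbb{Z}$). Since solvability is the weakest conclusion one could hope for, the strategy is to show that in each case $\Out(L)$ is in fact built from abelian pieces in a very controlled way.

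For the easy families, I would argue as follows. For $L = \mathbb{Z}/p\mathbb{Z}$, $\Out(L) \cong (\mathbb{Z}/p\mathbb{Z})^\times$ is cyclic. For $L = \Alt(n)$, a classical calculation gives $\Out(\Alt(n)) \cong \mathbb{Z}/2\mathbb{Z}$ for $n \geq 5$, $n \neq 6$, and $\Out(\Alt(6)) \cong (\mathbb{Z}/2\mathbb{Z})^2$; in particular these are abelian. For the 26 sporadic groups, I would cite the ATLAS: one checks entry by entry that $|\Out(L)| \leq 4$, and in each case the group is abelian.

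The heart of the matter is the Lie type case. The plan is to appeal to Steinberg's description of $\Aut(L)$: if $L$ is a finite simple group of Lie type over $\mathbb{F}_q$ with $q = p^f$, then there is a short exact sequence
\[
1 \longrightarrow \mathrm{Inndiag}(L)/\mathrm{Inn}(L) \longrightarrow \Out(L) \longrightarrow \Phi \rtimes \Gamma \longrightarrow 1,
\]
where the kernel (diagonal automorphisms modulo inner) is abelian, $\Phi$ is the cyclic group of field automorphisms of order $f$, and $\Gamma$ is the graph automorphism group, which injects into the symmetry group of the Dynkin diagram and hence into $\Sym(3)$. Since abelian groups, cyclic groups, and $\Sym(3)$ are all solvable, and extensions of solvable groups by solvable groups remain solvable, $\Out(L)$ is solvable.

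The main obstacle is of course that this reduction relies on the full force of CFSG and on Steinberg's structure theorem, neither of which admits any shortcut. Since the present paper is not aiming to give a self-contained proof of the Schreier conjecture, the realistic execution of this plan is simply to cite the classification and the standard references describing $\Out(L)$ for each family (e.g.\ \cite{KL90} for classical groups, the corresponding results for exceptional groups, and the ATLAS for the sporadics); no new calculation is needed beyond noting that in each family the pieces assembled into $\Out(L)$ are solvable.
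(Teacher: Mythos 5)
Your proposal is correct and matches what the paper does: the paper states the Schreier conjecture without proof, as a known consequence of the Classification, and your case-by-case sketch (cyclic and alternating groups, ATLAS check for the sporadics, and Steinberg's description of $\Aut(L)$ for Lie type, where diagonal-modulo-inner is abelian, field automorphisms are cyclic, and graph automorphisms embed in $\Sym(3)$) is exactly the standard argument behind that citation. The only caveat worth noting is that in the twisted (Suzuki/Ree) and some other cases the field and graph automorphisms intertwine rather than splitting as cleanly as your exact sequence suggests, but every piece remains solvable, so the conclusion is unaffected.
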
 

The following two observations concern sections of finite groups.

\begin{remark} \label{remSimSec}
		Let $N \lhd G$, and suppose that a section $H_0/N_0 \cong L$ of $G$ is simple.
		Then $N \subseteq N_0$, or  $NN_0 \geqslant H_0$.
		In the first case $L$ appears as a section of $G/N$, while in the second case $L$ appears as a section of $N$.
	\end{remark} 
	
	\begin{lemma} \label{lemSymSec}
	Let $\Omega$ be a finite set, and let $N \lhd H \leqslant \Sym(\Omega)$.
	If $H/N \cong L$ is simple, then $\Sym(\Omega)$ has a subgroup isomorphic to $L$.
\end{lemma}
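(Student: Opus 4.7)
The plan is to proceed by induction on $|\Omega|$, with the case $|\Omega| \leq 1$ being trivial. The central idea is to exploit the induced action of $H$ on the set of $N$-orbits of $\Omega$: since $N$ is normal in $H$, this action is well-defined, and since $N$ itself acts trivially, it factors through the simple quotient $L = H/N$. Let $m$ denote the number of $N$-orbits, and observe that $m \leq |\Omega|$.

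First I would dichotomize according to whether the $L$-action on this set of $m$ orbits is faithful. If it is, then $L$ embeds in $\Sym(m)$, which in turn embeds in $\Sym(\Omega)$ via the obvious action on $m$ chosen points while fixing the remaining $|\Omega|-m$, completing the proof at once. If instead the action is not faithful, then its kernel is a nontrivial normal subgroup of the simple group $L$, forcing the kernel to be all of $L$. Hence $L$ acts trivially, which means $H$ preserves each $N$-orbit setwise, so $N$-orbits coincide with $H$-orbits. Picking any $N$-orbit $O$ and any $\omega \in O$, the transitivity of $N$ on $O$ gives $H = N \cdot H_\omega$ by a Frattini-style argument, and therefore
\[ L \, \cong \, H/N \, \cong \, H_\omega / (H_\omega \cap N). \]
Since $H_\omega$ fixes $\omega$, we have $H_\omega \leqslant \Sym(\Omega \setminus \{\omega\})$, and the inductive hypothesis applied to this strictly smaller symmetric group produces an isomorphic copy of $L$ inside $\Sym(\Omega \setminus \{\omega\})$, which embeds into $\Sym(\Omega)$ by extending permutations to fix $\omega$.

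The only genuinely delicate point is setting up the correct dichotomy, namely realizing that it is the action on $N$-orbits (rather than on $H$-orbits or on $\Omega$ directly) that one should analyse. Once this is in place, simplicity of $L$ collapses the non-faithful branch to the much stronger statement that $N$ acts transitively on each $H$-orbit, and the inductive step then reduces the size of $\Omega$ by one, so the recursion closes with no further work.
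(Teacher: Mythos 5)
Your argument is correct, and it differs from the paper's in how the non-inductive branch is handled, while the inductive step is essentially the same. The paper fixes a point $x$, looks at the subgroup $NH_x/N$ of $L=H/N$, and splits into three cases: if $N< NH_x< H$ it uses the (automatically faithful, by simplicity) coset action of $L$ on at most $|\Omega|$ cosets; if $NH_x=N$ it gets $|L|\leq|\Omega|$ and invokes the Cayley embedding; if $NH_x=H$ it passes, exactly as you do, to $H_x/(H_x\cap N)\cong L$ inside $\Sym(\Omega\setminus\{x\})$ and inducts. You instead analyse the $L$-action on the block system of $N$-orbits: simplicity collapses the dichotomy to ``faithful'' (giving $L\hookrightarrow\Sym(m)\leq\Sym(\Omega)$ directly) or ``trivial'', and in the trivial case the equality of $N$-orbits and $H$-orbits plus a Frattini argument yields $H=NH_\omega$, which is precisely the paper's third case, so the same one-point-deletion induction closes the proof. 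Your route buys a cleaner two-case structure (no Cayley subcase, and faithfulness comes for free from the kernel being a normal subgroup of $L$), at the cost of introducing the orbit block system and the Frattini step; the paper's version is more pointwise and elementary, needing only index counts on a single stabilizer. Both proofs are valid for any simple $L$, including cyclic of prime order, and both handle the base case vacuously, since a trivial $H$ has no simple quotient.
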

\begin{proof}
Let $|\Omega|=n$, $x \in \Omega$, and work by induction on $n$.
Let $H_x$ be stabilizer of $x$.
We claim that $NH_x/N$ is a corefree subgroup of $H/N$ of index at most $n$.
First,
$$ |(H/N):(NH_x/N)| = |H:NH_x| \leq |H:H_x| \leq n . $$
Now, if $NH_x$ is neither $N$ nor $H$, then the proof is finished by the fact that $H/N$ is simple.
In the first case $|H/N| \leq n$, and the proof follows by the Cayley embedding theorem.
If $NH_x=H$, then $H_x/H_x \cap N \cong H/N \cong L$,
hence by induction $\Sym(\Omega \setminus \{x\})$ and so $\Sym(\Omega)$ has a subgroup isomorphic to $L$, as desired.
\end{proof}

In the following remark, we discuss the asymptotic behaviour of $\D_0(L)$ and $\a(L)$,
when $L$ is a finite simple group, and $|L| \rightarrow +\infty$.
Of course, we do not care about the sporadic simple groups in this context.

\begin{remark} \label{rem:LieCon} 
For $n \geq 6$,
$$ \D_0(\Alt(n)) = n-1, \quad \hbox{and} \quad \a(\Alt(n)) = \frac{\log (n-1)}{\log (n!/2)} . $$
In particular, for $L=\Alt(n)$, we have
$$ \D_0(L) \sim \frac{\log|L|}{\log\log|L|} , \quad \hbox{and} \quad \a(L) \sim \frac{\log\log|L|}{\log|L|} $$
when $|L| \rightarrow +\infty$.
These are the less quasirandom among the nonabelian finite simple groups.
If $L=L_d(q)$ is a finite simple group of Lie type, then it is easy to check from the Appendix that
	$$ q^{10^{-1}d} \> \leq \> \D_0(L_d(q)) \> \leq \> q^{10d} , $$
	and
	$$ 10^{-1} d^{-1} \> \leq \> \a(L_d(q)) \> \leq \> 10d^{-1} . $$
	In particular, if $L=L_d(q)$ and $q$ is fixed, then $\D_0(L)$ is comparable with $c^{\sqrt{\log|L|}}$,
	while $\a(L)$ is comparable with $\frac{c}{\sqrt{\log|L|}}$.
\end{remark} 

\begin{proof}[Proof of Theorem \ref{thm:A}]
After Lemma \ref{lemNew}, we have only to prove the first part of the theorem.
Fix $\varepsilon>0$.
From Corollary \ref{corActions}(i), it is clear that $|G/\Sol(G)| \geq |G|^\varepsilon$.
Moreover, $G/\Sol(G)$ is again $\varepsilon$-quasirandom.
Up to replacing $G$ with $G/\Sol(G)$, we can assume that $G$ has a trivial solvable radical.
Now it is well known that the generalized Fitting subgroup $F=F^*(G)$ coincides with the socle of $G$,
and it is the direct product of nonabelian simple groups $T_i$'s.
Let $F= \prod_i (T_i)^{n_i}$, where $n_i \geq 1$ for each $i$.
Moreover, as $\C_G(F) \subseteq F$ \cite[(31.13)]{Asc00}, we have $\C_G(F) = \Z(F) =1$.
It follows that $G$ can be embedded as a subgroup of
$$ \Aut(F) \> \cong \> \prod_i \Aut(T_i)^{n_i} \rtimes \Sym(n_i) . $$
In particular, all the composition factors of $G/F$ lie as sections of one among $\Out(T_i)$ or $\Sym(n_i)$, for some $i$.
We claim that $G=F$.
Otherwise, it is easy to see that $G/F$ has a simple quotient $L$ which is $\varepsilon$-quasirandom.
 By the definition of quasirandom degree, we have
\begin{equation} \label{eqThA}
|G| \> \leq \> \D_0(L)^{1/\varepsilon} .
\end{equation}
So, if $\D_0(L)$ is bounded, then $|G|$ is bounded.
Let $n = \max_i n_i$, so that $|F| \geq 60^n$.
From Remark \ref{remSimSec} and Theorem \ref{thSchreier}, $L$ has to appear as a section of $\Alt(n)$.
From Lemma \ref{lemSymSec}, $\Sym(n)$ has a subgroup isomorphic to $L$.
From Lemma \ref{lemActions} we obtain $\D_0(L) \leq n$, and so
$$ |F| \> \geq \> 60^{\D_0(L)} . $$
When $\D_0(L)$ is sufficiently large, this gives a contradiction to (\ref{eqThA}).
We have proven that $G/\Sol(G)=F^*(G/\Sol(G))$, so $G/\Sol(G)$ is an $\varepsilon$-quasirandom direct product of finite simple groups,
say $G/\Sol(G) = \prod_i (T_i)^{n_i}$.
From Remark \ref{rem:LieCon}, each $T_i$ is a simple group of Lie type of $\varepsilon$-bounded rank.
Let $N= \sum_i n_i$.
From Lemma \ref{lemDP}(ii), we have $\varepsilon \leq \a(G/\Sol(G)) \leq N^{-1}$,
so that $N \leq \varepsilon^{-1}$.
\end{proof} 

\begin{remark} \label{remAtt} 
An attentive reader will have noticed that a bound of the type $\D_0(G) \geq c \log|G|$ is strong enough to give that
$G/\Sol(G)$ is the direct product of nonabelian finite simple groups.
\end{remark}

It is natural to ask whether the condition $|G/\Sol(G)| \geq |G|^\varepsilon$ is actually superflous in the second part of Theorem \ref{thm:A},
as it may be implied by the quasirandomness of the sole $G/\Sol(G)$.
The answer is no, as it can even happen that $|G/\Sol(G)|$ is bounded:
it is enough to consider the perfect deleted permutation module $(C_n)^4 \rtimes \Alt(5)$ when $n$ is large.
The following example, where $|G/\Sol(G)|$ is unbounded but too small, is due to S. Eberhard.

\begin{example}
	Let $p \geq 5$ be a prime, and $d \geq 2$.
	We define $G_d$ as the group of the $2d \times 2d$ upper triangular matrices whose diagonal entries
	are identical matrices in $\SL_2(p)$, and the upper entries are arbitrary $2 \times 2$ matrices over $\mathbb{F}_p$.
	For instance,
	$$
	G_3 \> = \>
	\left(
	\begin{array}{ccc}
		g & * & *  \\
		0 & g & *  \\
		0 & 0 & g  \\
	\end{array}
	\right) ,
	$$
	where $g \in \SL_2(p)$, and $*$ is arbitrary.
	We have
	$$ |G_d| = \frac{1}{2}(p^3-p) p^{2d(d-1)} \geq p^{d^2} . $$
	It can be proven that $G_d$ is perfect, and its solvable radical is the subgroup where $g \in \Z(\SL_2(p))$ is a scalar matrix.
	In particular, $G_d/\Sol(G_d)$ is isomorphic to $\PSL_2(p)$, which is $0.34$-quasirandom for large $p$, but its cardinality is less than $p^3$.
\end{example}

\subsection{Proof of Theorem \ref{thm:B}} 

We first prove the shallow version of Theorem \ref{thm:B}.
This is a nice application of (\ref{eqN}).

\begin{proof}[Proof of Corollary \ref{cor:1/3}]
From the data collected in the Appendix,
we note that, among the finite simple groups, $J_1$ and $O'N$ are the only ones whose quasirandom degree is at least $\frac{1}{3}$.
Suppose that $G$ is of minimal cardinality among the finite groups satisfying
$\a(G) \geq \frac{1}{3}$ that are not isomorphic to either $J_1$ or $O'N$.
Of course, there exists a nontrivial proper normal subgroup $N$ of $G$.
We have $\a(G/N) \geq \frac{1}{3}$, and $G/N$ is isomorphic to either $J_1$ or $O'N$.
In the former case, from (\ref{eqN}), we have
$$ |N| \leq \frac{\D_0(J_1)^{1/\a(G)}}{|J_1|} \leq \frac{56^3}{175\,560} \approx 1.00031 . $$
So $N=1$, a contradiction.
Now suppose $G/N \cong O'N$. From (\ref{eqN}),
$$ |N| \leq \frac{\D_0(O'N)^{1/\a(G)}}{|O'N'|} \leq \frac{10 \, 944^3}{460\,815\,505\,920} \approx 2.84446 , $$
so that $N$ is the cyclic group of order $2$. 
Since the only nonidentity element of $N$ has to be fixed by conjugation, $N$ is central in $G$. In particular, $G$ is a perfect central extension of $O'N$ by $N$. On the other hand, the Schur multiplier of $O'N$ has cardinality $3$, and such an extension does not exist.
\end{proof} 

The following observation is not really needed in the proof of Theorem \ref{thm:B},
but it makes the structure of $G$ more transparent.

\begin{lemma} \label{lemUniqueM}
Suppose that $\a(G) \geq \frac{1}{5}$, or that $\a(G) \geq \frac{1}{6}$ and $|G| \geq 2 \cdot 10^{24}$.
Then $G$ has a unique maximal normal subgroup.
\end{lemma}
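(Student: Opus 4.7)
The plan is to argue by contradiction: assume $M_1, M_2$ are two distinct maximal normal subgroups of $G$. Since $\a(G) > 0$ forces $G$ to be perfect, each quotient $S_i := G/M_i$ is perfect and simple, hence a nonabelian finite simple group. Distinctness plus maximality gives $M_1 M_2 = G$, so
\begin{equation*}
G / (M_1 \cap M_2) \;\cong\; S_1 \times S_2,
\end{equation*}
exhibiting a direct product of two nonabelian simple groups as a quotient of $G$. Combining the quotient inequality of Lemma \ref{lemExt1} with Lemma \ref{lemDP} I would then write
\begin{equation*}
\a(G) \;\leq\; \a(S_1 \times S_2) \;\leq\; \tfrac{1}{2}\max\{\a(S_1), \a(S_2)\},
\end{equation*}
so some simple factor, say $S_1$, satisfies $\a(S_1) \geq 2\a(G)$.

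In the first case, $\a(G) \geq 1/5$ forces $\a(S_1) \geq 2/5$. By Corollary \ref{cor:1/3} the only nonabelian simple groups with $\a \geq 1/3$ are $J_1$ and $O'N$, and a direct computation gives $\a(J_1) = \log(56)/\log|J_1| \approx 0.333$ and $\a(O'N) = \log(10944)/\log|O'N| \approx 0.346$, both strictly less than $2/5$ -- contradiction.

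In the second case, $\a(G) \geq 1/6$ still forces $\a(S_1) \geq 1/3$, hence $S_1 \in \{J_1, O'N\}$ and $\D_0(S_1) \leq \D_0(O'N) = 10944$. Since $\D_0(G) \leq \D_0(S_1 \times S_2) \leq \D_0(S_1)$, I would then conclude
\begin{equation*}
|G| \;\leq\; \D_0(G)^{1/\a(G)} \;\leq\; 10944^6 \;<\; 2\cdot 10^{24},
\end{equation*}
contradicting $|G| \geq 2 \cdot 10^{24}$. The only substantive step is noticing that two distinct maximal normal subgroups automatically produce a direct product quotient $S_1 \times S_2$, after which Lemma \ref{lemDP} halves the quasirandom degree and the numerical data on $J_1$ and $O'N$ already recorded in the paper close both cases; so I do not anticipate any real obstacle.
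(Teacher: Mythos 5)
Your proposal is correct and follows essentially the same route as the paper: two distinct maximal normal subgroups yield the quotient $G/(M_1\cap M_2)\cong G/M_1\times G/M_2$, Lemma \ref{lemDP} halves the quasirandom degree, and Corollary \ref{cor:1/3} together with the values $\a(J_1),\a(O'N)<\tfrac25$ and $\D_0(O'N)^6=10\,944^6<2\cdot10^{24}$ closes both cases exactly as in the paper's argument.
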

\begin{proof}
From Corollary \ref{cor:1/3}, for every finite group $G$ we have
$$ \a(G) \leq \max \left\{ \a(J_1) , \a(O'N) \right\} = \a(O'N) < \frac{2}{5} , $$
while $\a(G) <\frac{1}{3}$ if $G$ is not isomorphic to $J_1$ or $O'N$.
Suppose that $M_1$ and $M_2$ are two distinct maximal normal subgroups of the group $G$.
Since $M_1M_2/M_1$ is a normal subgroup of the simple group $G/M_1$, we obtain that $G=M_1M_2$.
So $M_1/(M_1 \cap M_2)$ and $M_2/(M_1 \cap M_2)$ are normal subgroups of $G/(M_1 \cap M_2)$ with trivial intersection.
It follows that
$$ G/(M_1\cap M_2) = M_1/(M_1\cap M_2) \times M_2/(M_1\cap M_2) \cong G/M_2 \times G/M_1 . $$
Lemma \ref{lemDP}(i) yields
$$ \a(G) \leq \a( (G/M_2) \times (G/M_1) ) < \frac{1}{5} , $$
and $\a(G) < \frac{1}{6}$ if neither $J_1$ or $O'N$ appears as a quotient of $G$.
If one among $J_1$ or $O'N$ appears as a quotient, then
$$ |G| \leq \D_0(G)^6 < \D_0(O'N)^6 = 10\,944^6 < 2 \cdot 10^{24} , $$
because $\D_0(J_1) < \D_0(O'N)$.
\end{proof}

We are ready to prove Theorem \ref{thm:B}.
This proof has a number of cases, the most difficult, which we treat in the end, being the following.
We have an abelian minimal normal subgroup $N$ of $G$ such that $G/N$ is quasisimple,
and the action of $G/N$ on $N$ by conjugation provides a
	projective modular representation of the simple quotient of $G/N$, say $L$.
	The key point is that the (complex, linear) quasirandomness of $L$ controls also the degrees of the modular projective representations.
	This gives a lower bound on $|N|$, which we put against the upper bound given by (\ref{eqN}).
	We start with the details.

\begin{remark} 	\label{rem:calculator}
The veracity of Theorem \ref{thm:B} has been checked with a computer for all the finite groups of order up to $2\cdot 10^6$,
using the library \texttt{PerfectGroups} in the GAP System \cite{GAP}.
\end{remark} 

	Let $G$ be a counterexample to Theorem \ref{thm:B} whose order is minimal.
	In particular, $\a(G) \geq \frac{1}{5}$ and $G$ is not quasisimple.
	By Lemma \ref{lemUniqueM}, $G$ has a unique maximal normal subgroup and cannot be written as a direct product of nontrivial groups.
	Therefore, using also Lemma \ref{lemKey}, we find a chain
	$$ 1 \> \lhd \> N \> \lhd \> M \> \lhd \> G  $$
	where $N$ is the unique minimal normal subgroup of $G$, and $M$ is the unique maximal normal subgroup of $G$.
	It follows that $G/M$ is isomorphic to a simple group $L$ satisfying $\a(L) > \frac{1}{5}$,
	and that $N \cong T^n$ is the direct product of isomorphic simple groups. 
	We distinguish two cases: either $G/N$ is quasisimple or not.
	Let us start from the latter.
	
	\medskip \noindent \textsc{\underline{$G/N$ is not quasisimple.}}
	We have $\a(G/N) > \frac{1}{5}$, and, by the minimality of $G$, we are in one of the two cases:
	\begin{enumerate}[$(a)$]
		\item $G/N \cong (\mathbb{F}_3)^3 \rtimes \SL_3(3)$;
		\item $G/N$ is a perfect central extension of $(\mathbb{F}_{q})^4 \rtimes \Sp_4(q)$, $q=2^f$, $f \geq 6$.
	\end{enumerate}

	If we are in case $(a)$, using (\ref{eqN}) we obtain
	$$ |N| \leq \frac{\D_0((\mathbb{F}_3)^3 \rtimes \SL_3(3))^5}{|(\mathbb{F}_3)^3 \rtimes \SL_3(3)|} = \frac{12^5}{151\,632} \approx 1,64 . $$
	Hence $N$ is trivial, a contradiction.
	
	If we are in case $(b)$, we remark that $G/M \cong \Sp_4(q)$ and $\a(G/N) <\frac{3}{14}$.
	Let $N \lhd N_0 \lhd M$ such that $G/N_0 \cong (\mathbb{F}_{q})^4 \rtimes \Sp_4(q)$, $N_0/N \subseteq \Z(G/N)$.
	First suppose that $N$ is abelian.
	If $N \subseteq \Z(G)$, then $N_0 \subseteq \Z(G)$ by Lemma \ref{lemExtQuasi}.
	Thus $G$ is a perfect central extension of $(\mathbb{F}_q)^4 \rtimes \Sp_4(q)$, against the assumptions.
	 If $N \nsubseteq \Z(G)$, then by Lemma \ref{lemImprove},
	$$ \frac{3}{14} > \a(G/N) > \frac{\a(G)}{1-\a(G)} \geq \frac{1}{4} . $$
	This is impossible.
	It remains the case where $N$ is nonabelian, say $N= T^n$ where $T$ is a nonabelian finite simple group.
	We argue in a similar way as in Theorem \ref{thm:A}:
	since $N$ is the unique minimal normal subgroup of $G$,
	we have $\C_G(N) =1$ and the same argument works.
	We have
	\begin{equation} \label{eqSp}
	|G| \> \leq \> \D_0( \Sp_4(q) )^5 . 
	\end{equation}	 
	Considering the embedding of $G$ into $\Aut(T^n)$,
	and arguing as before, we obtain $\D_0( \Sp_4(q) ) \leq n$.
	Since $|N| \geq 60^n$, and $60^x > x^5$ for every $x \geq 1$, this contradicts (\ref{eqSp}).
	
	\medskip \noindent \textsc{\underline{$G/N$ is quasisimple.}}
	Let $C=\C_G(N)$. We distinguish three cases.
	
	\smallskip \noindent \textsc{Suppose that $C=G$.}
	Here $N$ is central and $G$ is quasisimple by Lemma \ref{lemNQuasi}, a contradiction.
	
	\smallskip \noindent \textsc{Suppose that $C=1$.}
	Here $N=T^n$ is the product of nonabelian finite simple groups, and $G$ is embedded into $\Aut(T^n)$.
	Let $G/M=L$. Arguing again as in the proof of Theorem \ref{thm:A}, we obtain
	$$ |G| \leq \D_0(L)^5, \hspace{0.5cm} \mbox{and} \hspace{0.5cm} |N| \geq 60^{\D_0(L)} . $$
	This is impossible for any value of $\D_0(L)$.
	
	\smallskip \noindent \textsc{Suppose that $N \subseteq C \subseteq M$.}
	This case will take up the rest of the proof.
	First, $N$ is an elementary abelian noncentral subgroup, and Lemma \ref{lemImprove} provides
	$$ \a(L) \geq \a(G/N) > \frac{1}{4} . $$
	Moreover, the action of $G/N$ on $N$ by conjugation provides a
	projective modular representation of $L$ (recall that $G/N$ is quasisimple).
	As already explained, the key idea is that the quasirandomness of $G/N$ gives a lower bound on $|N|$,
	which we put against the upper bound given by (\ref{eqN}).
	We observe that $\Alt(5)$ and $\Alt(6)$ are $\frac{1}{4}$-quasirandom, but if one among them appears as $L$,
	then
	$$ |G| \leq \D_0(L)^5 \leq 5^5 = 3\,125 , $$
	and we can easily work wit a computer (see Remark \ref{rem:calculator}).
	One last time, we divide the discussion into three cases:
	\begin{enumerate}[$(i)$]
		\item $L$ is a simple group of Lie type
		over a field of characteristic $p$, and $G/N$ induces a projective modular representation of $L$ in cross-characteristic;
		\item $L$ is a simple group of Lie type
		over a field of characteristic $p$, and $G/N$ induces a projective modular representation of $L$ in natural characteristic;
		\item $L$ is a sporadic simple group.
	\end{enumerate}
	
	\smallskip \noindent \textsc{Let $L$ be as described in $(i)$.}
	We remark that $|L| > \D_0(L)^3$ by Corollary \ref{cor:1/3}.
	Hence, by (\ref{eqN}), 
	$$ |N| \leq |M| \leq \frac{\D_0(L)^5}{|L|} < \D_0(L)^2 . $$
	From Remark \ref{rem14simple}, and the definition of $\Di_r(L)$ with $r\ne p$ a prime, we obtain
	$$ 2^{\D_0(L) -1} \leq 2^{\Di_r(L)} \leq |N| < \D_0(L)^2  . $$
	The above inequality is possible only if $\D_0(L) \leq 6$, which gives
	$$ |G| \leq \D_0(L)^5 \leq 6^5 = 7 \, 776 . $$
	This case can be handled computationally.
	
	\smallskip \noindent \textsc{Let $L$ be as described in $(ii)$.} 
	We first deal with $L \cong \mathrm{Sp}_4(q)$ with $q$ even, $q \geq 4$.
	Since the Schur multiplier is trivial, we have $N=M$.
	By (\ref{eqN}), we find
	\[\begin{split}
		|N|
		&\leq \frac{\D_0(\mathrm{Sp}_4(q))^5}{|\mathrm{Sp}_4(q)|} 
		\\&\leq  \frac{(q^2-1)^5(q^2-q)^5}{2^5(q+1)^{5}}\cdot \frac{1}{q^4(q^4-1)(q^2-1)}
		\\&< q^5 . 
	\end{split} \] 
	On the other hand, by Lemma \ref{lemMinimalV} and Table \ref{table:1/4},
	$$ |N| \geq q^4 . $$
	So $|N|=q^4$, and we are in the situation of Theorem \ref{thm:B}\ref{thm:B:b}.
	
	The argument to deal with the other cases is the same, using (\ref{eqN}), Lemma \ref{lemMinimalV} and Table \ref{table:1/4}.
	When $L \cong \PSL_2(q)$,
	$$ |N| \leq \frac{\D_0(\PSL_2(q))^5}{|\PSL_2(q)|} \leq \frac{(q-1)^5}{q(q^2-1)} < q^2 . $$
	On the other hand,
	$$ |N| \geq q^2 , $$
	which provides a contradiction.
	If $L \cong \PSL_3(q)$, $q \geq 4$, we obtain
	$$ q^3 \leq |N| \leq \frac{q^5(q+1)^5}{q^3(q^3-1)(q^2-1)(q-1)} < q^3 . $$
	When $L \cong G_2(q)$, $q=3(\mathrm{mod}\,6)$, we have
	$$ q^7 \leq |N| \leq  \frac{(q+1)^5(q^2-q+1)^5}{q^6(q^6-1)(q^2-1)} < q^7 . $$
	For $^2B_2(q^2)$, 
	$$ q^8 \leq |N| \leq \frac{q^5(q^2-1)^5}{2^{5/2}q^4 (q^4-1)(q^2-1)} < q^5 . $$
	Finally, for $^2G_2(q^2)$,
	$$  q^{14} \leq |N| \leq \frac{(q^4-q^2+1)^5}{q^6(q^6-1)(q^2-1)} < q^7 . $$
	In all these four cases, the chain of inequalities is impossible.
	
	\smallskip \noindent \textsc{Let $L$ be as described in $(iii)$.}
	Since $\a(L) > \frac{1}{4}$, the possibilities for $L$ are
	$$ M_{11}, \quad
	J_1, \quad
	J_3, \quad 
	^2F_2(4)', \quad
	O'N . $$ 
	Observe that $M_{11}$, $J_1$ and $^2F_2(4)'$ admit no central perfect extensions,
	while the perfect central extensions of $J_3$ and $O'N$ are not $\frac{1}{5}$-quasirandom.
	Therefore $N=M$ is the unique normal subgroup of $G$,
	and $N$ can be thought as the vector space of a nontrivial modular representation of $L$.
	For each sporadic group,
	we use \cite{Jan05} to obtain a lower bound on $|N|$,
	and we compare the result with (\ref{eqN}).
	For $L \cong M_{11}$, we have
	$$ 3^5 \leq |N| \leq \frac{10^5}{|M_{11}|} \leq 13 ; $$
	For $L \cong J_1$, 
	$$ 2^{20} \leq |N| \leq \frac{56^5}{|J_1|} \leq 3 \, 138 ; $$
	For $L \cong J_3$,
	$$ 3^{18} \leq |N| \leq \frac{85^5}{|J_3|} \leq 89 ; $$
	For $L \cong \> ^2F_2(4)'$,
	$$ 2^{26} \leq |N| \leq \frac{104^5}{|^2F_2(4)'|} \leq 678 ; $$
	Finally, for $L \cong O'N$,
	$$ 3^{154} \leq |N| \leq \frac{10\,944^5}{|O'N|} \leq 10^{10} . $$
	These contradictions conclude the proof of Theorem \ref{thm:B}.

\appendix 
\section*{Appendix}
\section{Appendix: Quasirandom degree of finite simple groups} \label{appendix} 

We already discussed the alternating groups in Remark \ref{rem:LieCon}.
In particular, we observed that they are the less quasirandom among the nonabelian finite simple groups.

In the following tables, we collect the values $\D_0(L)$ and $\a(L)$ when $L$ is a finite simple group of Lie type or a sporadic simple group.
For a group of Lie type $L_d(q)$, we fix $d$ and make $q$ go to infinity.
The case where $L$ is of exceptional type is discussed in detail by L\"ubeck \cite{Lub01}.
So let $L$ be a classical group of Lie type.
In \cite{TZ96}, Tiep and Zalesskii computed the minimal dimension of a projective complex irreducible representation of $L$,
which we denote by $\Di_0(L)$. See also \cite{MM21}.
 In Table \ref{table:ClassicOrdinary}, we use this information to record $\D_0(L)$ for the simple classical groups.
 We remark that $\Di_0(L) < \D_0(L)$ holds in some cases, for example for some symplectic groups in odd characteristic.
 
 In Tables \ref{tableLieCl}, \ref{tableLieEx}, and \ref{table:Spor},
we collect the asymptotic quasirandom degrees of the finite simple groups of Lie type,
and the quasirandom degrees of the sporadic simple groups \cite{ATLAS}.
We remark that, after $(\PSL_2(q))_{q \geq 4}$, which is asymptotically $\frac{1}{3}$-quasirandom,
the most quasirandom families of finite simple groups are
$(\mathrm{Sp}_4(q))_{q =2^f}$ and the Suzuki groups $(^2B_2(q^2))_{q \geq 8}$,
whose quasirandom degrees are asymptotically $\frac{3}{10}$.

{\small 
\begin{table}[ht]
	\centering
	\begin{tabular}{| c c | c |}
		\hline
		
		\phantom{$\dfrac{1^1}{1_{1_1}}$}\hspace{-6mm}
		$L$ & Comments  & $\D_0(L)$ \\
		
		\hline \hline
		
		\phantom{$\dfrac{1^1}{1_{1_1}}$}\hspace{-6mm}
		$\PSL_d(q)$ & $d=2$, $q$ even & $q-1$ \\
		
		\phantom{$\dfrac{1^1}{1_{1_1}}$}\hspace{-6mm}
		& $d=2$, $q = 1\, (\mathrm{mod}\,4) $ & $\dfrac{q+1}{2}$ \\
		
		\phantom{$\dfrac{1^1}{1_{1_1}}$}\hspace{-6mm}
		& $d=2$, $q = 3\, (\mathrm{mod}\,4) $ & $\dfrac{q-1}{2}$ \\
		
		\phantom{$\dfrac{1^1}{1_{1_1}}$}\hspace{-6mm}
		& $d\geq 3$, $(d,q) \ne (3,2),(4,2),(4,3)$ & $\dfrac{q^d -q}{q-1}$ \\
		
		\phantom{$\dfrac{1^1}{1_{1_1}}$}\hspace{-6mm}
		& $d = 3$, $q=2$ & $3$ \\
		
		\phantom{$\dfrac{1^1}{1_{1_1}}$}\hspace{-6mm}
		& $d = 4$, $q=2$ & $7$ \\
		
		\phantom{$\dfrac{1^1}{1_{1_1}}$}\hspace{-6mm}
		& $d = 4$, $q=3$ & $26$ \\
		
		\phantom{$\dfrac{1^1}{1_{1_1}}$}\hspace{-6mm}
		$\mathrm{PSU}_d(q)$ & $d\geq 3$ odd & $\dfrac{q^d-q}{q+1}$ \\
		
		\phantom{$\dfrac{1^1}{1_{1_1}}$}\hspace{-6mm}
		& $d\geq 4$ even, $(d,q) \ne (4,2)$ & $\dfrac{q^d-1}{q+1} \leq \D_0(L) \leq \dfrac{q^d+q}{q+1}$ \\
		
		\phantom{$\dfrac{1^1}{1_{1_1}}$}\hspace{-6mm}
		& $d=4$, $q=2$ & $5$ \\
		
		\phantom{$\dfrac{1^1}{1_{1_1}}$}\hspace{-6mm}
		$\mathrm{PSp}_{2d}(q)$ & $d\geq 2$, $q$ even & $\dfrac{(q^d-1)(q^d-q)}{2(q+1)}$ \\
		
		\phantom{$\dfrac{1^1}{1_{1_1}}$}\hspace{-6mm}
		& $d\geq 2$, $q^d= 1\, (\mathrm{mod}\,4) $ & $\dfrac{q^d+1}{2}$ \\
		
		\phantom{$\dfrac{1^1}{1_{1_1}}$}\hspace{-6mm}
		& $d\geq 2$, $q^d= 3\, (\mathrm{mod}\,4) $ & $\dfrac{q^d-1}{2}$ \\
		
		\phantom{$\dfrac{1^1}{1_{1_1}}$}\hspace{-6mm}
		$\Omega_{2d+1}(q)$& $d\geq 3$, $q\ne 3$ and $2(q^2-1) \nmid q^{2d}-1$ & $\dfrac{q^{2d}-1}{q^2-1}$ \\
		
		\phantom{$\dfrac{1^1}{1_{1_1}}$}\hspace{-6mm}
		& $d\geq 3$, $q= 3$ or $2(q^2-1) \mid q^{2d}-1$ & $\dfrac{(q^{d}-1)(q^d-q)}{2(q+1)}$ \\
		
		\phantom{$\dfrac{1^1}{1_{1_1}}$}\hspace{-6mm}
		$\mathrm{P}\Omega^{-}_{2d}(q)$& $d\geq 4$ & $\dfrac{(q^d+1)(q^{d-1}-q)}{q^2-1}$ \\
		
		\phantom{$\dfrac{1^1}{1_{1_1}}$}\hspace{-6mm}
		$\mathrm{P}\Omega^{+}_{2d}(q)$& $d\geq 4$, $q\ne 2,3$ & $\dfrac{(q^d-1)(q^{d-1}+q)}{q^2-1}$ \\
		
		\phantom{$\dfrac{1^1}{1_{1_1}}$}\hspace{-6mm}
		& $d = 4$, $q= 2$ & $28$ \\
		
		\phantom{$\dfrac{1^1}{1_{1_1}}$}\hspace{-6mm}
		& $d \geq 5$, $q= 2$ or $d \geq 4$, $q= 3$  & $\dfrac{(q^d-1)(q^{d-1}-1)}{q^2-1}$  \\
		
		\hline
	\end{tabular}
	\medskip
	{\caption{$\D_0(L)$ for simple classical groups.}
		\label{table:ClassicOrdinary}}
\end{table}
}

{\small 

\begin{table}[ht]
	\centering
	\begin{tabular}{| c c | c c c |}
		\hline
		
		\phantom{$\dfrac{1^1}{1_{1_1}}$}\hspace{-6mm}
		$L$ & Comments & $\sim \log_q|L|$ & $\sim \log_q \D_0(L)$ & $\lim \limits_{q \rightarrow+\infty} \a(L)$ \\
		
		\hline \hline
		
		\phantom{$\dfrac{1^1}{1_{1_1}}$}\hspace{-6mm}
		$\PSL_d(q)$ & $d\geq 2$ & $d^2-1$ & $d-1$ & $\dfrac{1}{d+1}$ \\
		
		\phantom{$\dfrac{1^1}{1_{1_1}}$}\hspace{-6mm}
		$\mathrm{PSU}_d(q)$ & $d\geq 3$ & $d^2-1$ & $d-1$ & $\dfrac{1}{d+1}$ \\
		
		\phantom{$\dfrac{1^1}{1_{1_1}}$}\hspace{-6mm}
		$\mathrm{PSp}_{2d}(q)$ & $d\geq 2$, $q$ even & $2d^2+d$ & $2d-1$ & $\dfrac{2d-1}{2d^2+d}$ \\
		
		\phantom{$\dfrac{1^1}{1_{1_1}}$}\hspace{-6mm}
		& $d\geq 2$, $q$ odd & $2d^2+d$ & $d$ & $\dfrac{1}{2d+1}$ \\
		
		\phantom{$\dfrac{1^1}{1_{1_1}}$}\hspace{-6mm}
		$\Omega_{2d+1}(q)$& $d\geq 3$, $q$ odd & $2d^2+d$ & $2d-2$ & $\dfrac{2d-2}{2d^2+d}$ \\
		
		\phantom{$\dfrac{1^1}{1_{1_1}}$}\hspace{-6mm}
		$\mathrm{P}\Omega^{\pm}_{2d}(q)$& $d\geq 4$ & $2d^2-d$ & $2d-3$ & $\dfrac{2d-3}{2d^2-d}$ \\
		
		\hline
	\end{tabular}
	\medskip{\caption{Asymptotic quasirandom degree of simple classical groups.}
		\label{tableLieCl}}
\end{table}

\begin{table}[ht]
	\centering
	\begin{tabular}{| c c | c c c |}
		\hline
		
		\phantom{$\dfrac{1^1}{1_{1_1}}$}\hspace{-6mm}
		$L$ & Comments  & $\sim \log_q|L|$ & $\sim \log_q \D_0(L)$ & $\lim \limits_{q \rightarrow+\infty} \a(L)$ \\
		
		\hline \hline
		
		\phantom{$\dfrac{1^1}{1_{1_1}}$}\hspace{-6mm}
		$E_6(q)$ & & $78$ & $11$ & $\dfrac{11}{78}$ \\
		
		\phantom{$\dfrac{1^1}{1_{1_1}}$}\hspace{-6mm}
		$E_7(q)$ & & $133$ & $17$ & $\dfrac{17}{133}$  \\
		
		\phantom{$\dfrac{c1^1}{1_{1_1}}$}\hspace{-6mm}
		$E_8(q)$ & & $248$ & $29$ & $\dfrac{29}{248}$  \\
		
		\phantom{$\dfrac{1^1}{1_{1_1}}$}\hspace{-6mm}
		$F_4(q)$ & $q$ even & $52$ & $11$ & $\dfrac{11}{52}$ \\
		
		\phantom{$\dfrac{1^1}{1_{1_1}}$}\hspace{-6mm}
		$F_4(q)$ & $q$ odd & $52$ & $8$ & $\dfrac{2}{13}$ \\
		
		\phantom{$\dfrac{1^1}{1_{1_1}}$}\hspace{-6mm}
		$G_2(q)$ & $q \ne 3\, (\mathrm{mod}\,6) $ & $14$ & $3$ & $\dfrac{3}{14}$ \\
		
		\phantom{$\dfrac{1^1}{1_{1_1}}$}\hspace{-6mm}
		$G_2(q)$ & $q = 3\, (\mathrm{mod}\,6) $ & $14$ & $4$ & $\dfrac{2}{7}$ \\
		
		\phantom{$\dfrac{1^1}{1_{1_1}}$}\hspace{-6mm}
		$^2E_6(q^2)$ & & $78$ & $11$ & $\dfrac{11}{78}$ \\
		
		\phantom{$\dfrac{1^1}{1_{1_1}}$}\hspace{-6mm}
		$^3D_4(q^3)$ & & $28$ & $5$ & $\dfrac{5}{28}$  \\
		
		\phantom{$\dfrac{1^1}{1_{1_1}}$}\hspace{-6mm}
		$^2F_4(q^2)$ & $q=\sqrt{2^{2m+1}}$ & $52$ & $11$ & $\dfrac{11}{52}$  \\
		
		\phantom{$\dfrac{1^1}{1_{1_1}}$}\hspace{-6mm}
		$^2B_2(q^2)$ & $q=\sqrt{2^{2m+1}}$ & $10$ & $3$ &  $\dfrac{3}{10}$ \\
		
		\phantom{$\dfrac{1^1}{1_{1_1}}$}\hspace{-6mm}
		$^2G_2(q^2)$ & $q=\sqrt{3^{2m+1}}$ & $14$ & $4$ & $\dfrac{2}{7}$ \\
		
		\hline
	\end{tabular}
	\medskip
	{\caption{Asymptotic quasirandom degree of simple exceptional groups.}
	\label{tableLieEx}}
\end{table}
}

{\small

\begin{table}
	\centering
	\begin{tabular}{| c | c c c || c | c c c |}
		\hline
		
		\phantom{$\dfrac{1^1}{1_{1_1}}$}\hspace{-6mm}
		$L$ & $|L|$ & $\D_0(L)$ & $\a(L)$ & $L$ & $|L|$ & $\D_0(L)$ & $\a(L)$ \\
		
		\hline \hline
		
		\phantom{$\dfrac{1^1}{1_{1_1}}$}\hspace{-6mm}
		$M_{11}$ & $7\,920$ & $10$ & $\approx 0.2565$ &
		$Fi_{24}$ & $125\,520\,570\cdot 10^{16}$ & $8\,671$ & $\approx 0.1635$ \\
		
		\phantom{$\dfrac{1^1}{1_{1_1}}$}\hspace{-6mm}
		$M_{12}$ & $95\,040$ & $11$ & $\approx 0.2093$ &
		$HS$ & $44\,352\,000$ & $22$ & $\approx 0.1756$ \\
		
		\phantom{$\dfrac{1^1}{1_{1_1}}$}\hspace{-6mm}
		$M_{22}$ & $443\,520$ & $21$ & $\approx 0.2342$ &
		$McL$ & $898\,128\,000$ & $22$ & $\approx 0.1500$ \\
		
		\phantom{$\dfrac{1^1}{1_{1_1}}$}\hspace{-6mm}
		$M_{23}$ & $10\,200\,960$ & $22$ & $\approx 0.1916$ &
		$He$ & $4\,030\,387\,200$ & $51$ & $\approx 0.1778$ \\
		
		\phantom{$\dfrac{1^1}{1_{1_1}}$}\hspace{-6mm}
		$M_{24}$ & $244\,823\,040$ & $23$ & $\approx 0.1624$ & 
		$Ru$ & $145\,926\,144\cdot 10^3$ & $378$ & $\approx 0.2309$ \\
		
		\phantom{$\dfrac{1^1}{1_{1_1}}$}\hspace{-6mm}
		$J_1$ & $175\,560$ & $56$ & $\approx 0.3334$ &
		$Suz$ & $448\,345\,497 \cdot 10^3$ & $143$ & $\approx 0.1850$ \\
		
		\phantom{$\dfrac{1^1}{1_{1_1}}$}\hspace{-6mm}
		$J_2$ & $604\,800$ & $14$ & $\approx 0.1924$ &
		$O'N$ & $460\,815\,505\cdot 10^3$ & $10\,944$ & $\approx 0.3464$ \\
		
		\phantom{$\dfrac{1^1}{1_{1_1}}$}\hspace{-6mm}
		$J_3$ & $50\,232\,960$ & $85$ & $\approx 0.2506$ &
		$HN$ & $273\,030\,912\cdot 10^6$ & $133$ & $\approx 0.1472$ \\
		
		\phantom{$\dfrac{1^1}{1_{1_1}}$}\hspace{-6mm}
		$J_4$ & $867\,755\,710\cdot 10^{11}$ & $1\,333$ & $\approx 0.1568$ &
		$Ly$ & $517\,651\,790\cdot 10^8$ & $2\,480$ & $\approx 0.2031$ \\
		
		\phantom{$\dfrac{1^1}{1_{1_1}}$}\hspace{-6mm}
		$Co_3$ & $495\,766\,656\cdot 10^3$ & $23$ & $\approx 0.1165$ &
		$Th$ & $907\,459\,438\cdot 10^8$ & $248$ & $\approx 0.1413$ \\
		
		\phantom{$\dfrac{1^1}{1_{1_1}}$}\hspace{-6mm}
		$Co_2$ & $423\,054\,213\cdot 10^5$ & $23$ & $\approx 0.1000$ &
		$B$ & $415\,478\,148\cdot 10^{24}$ & $4\,371$& $\approx 0.1083$ \\
		
		\phantom{$\dfrac{1^1}{1_{1_1}}$}\hspace{-6mm}
		$Co_1$ & $415\,777\,680\cdot 10^{10}$ & $276$ & $\approx 0.1311$ &
		$M$ & $808\,017\,424\cdot 10^{44}$ & $196\,883$ & $\approx 0.0983$ \\
		
		\phantom{$\dfrac{1^1}{1_{1_1}}$}\hspace{-6mm}
		$Fi_{22}$ & $645\,617\,516\cdot 10^5$ & $78$ & $\approx 0.1371$ &
		$^2F_4(2)'$ & $17\,971\,200$ & $104$ & $\approx 0.2781$ \\
		
		\phantom{$\dfrac{1^1}{1_{1_1}}$}\hspace{-6mm}
		$Fi_{23}$ & $408\,947\,047\cdot 10^{10}$ & $782$ & $\approx 0.1555$ &
		$ $ & $ $ & $ $ & $ $ \\
		\hline
		
	\end{tabular}
	\medskip{\caption{Quasirandom degree of the sporadic simple groups.}
		\label{table:Spor}}
\end{table}
}

\section*{Acknowledgments}
We thank an anonymous referee who found a mistake in Lemma \ref{lemDP}.
Moreover, we thank S. Eberhard and P.H. Tiep for useful conversations.

\bibliographystyle{amsplain}



\begin{dajauthors}
\begin{authorinfo}[mb]
  Marco Barbieri\\
  Faculty of Mathematics and Physics, University of Ljubljana\\
  Jadranska 21, 1000 Ljubljana, Slovenia\\
  marco.barbieri\imageat{}fmf.uni-lj\imagedot{}si
\end{authorinfo}

\begin{authorinfo}[ls]
  Luca Sabatini\\
  Mathematics Institute, Zeeman Building, University of Warwick\\
  Coventry CV4 7AL, United Kingdom\\
  luca.sabatini\imageat{}warwick\imagedot{}ac\imagedot{}uk, sabatini.math\imageat{}gmail\imagedot{}com
\end{authorinfo}
\end{dajauthors}

\end{document}